\documentclass[11pt,a4paper]{article}

\usepackage{amsmath}
\usepackage{amsfonts}
\usepackage{amssymb}
\usepackage{centernot}
\usepackage[margin=3cm]{geometry}
\usepackage{enumitem}
\usepackage{hhline}
\usepackage{graphicx,fancybox}
\usepackage{tikz}
\usepackage{subcaption}
\usepackage{amsthm}
\usepackage{pgf}
\usepackage[font={small}]{caption}
\usepackage{etoolbox}
\usepackage{array, makecell, cellspace}
\usepackage{soul}

\flushbottom

\DeclareMathOperator*{\argmin}{argmin}
\DeclareMathAlphabet\mbc{OMS}{cmsy}{b}{n}

\setlist[enumerate]{label={(\roman*)},topsep=3pt}

\newcommand{\gra}{\operatorname{gra}}
\newcommand{\zer}{\operatorname{zer}}
\newcommand{\ran}{\operatorname{ran}}
\newcommand{\dom}{\operatorname{dom}}
\newcommand{\Id}{\operatorname{Id}}
\newcommand{\prox}{\operatorname{prox}}

\newcommand{\ve}{\operatorname{vec}}
\newcommand{\bs}{\boldsymbol}
\newcommand{\Hi}{\mathcal{H}}
\newcommand{\R}{\mathbb{R}}
\newcommand{\C}{\mathcal{C}}

\def\tto{\rightrightarrows}
\def\wto{\rightharpoonup}

\usepackage{titlesec}
\usepackage{titling}
\usepackage{anyfontsize}
\pretitle{\vspace{-1cm}\begin{center}\fontsize{16}{20}\bfseries\sffamily}
\pretocmd{\theequation}{\small}{}{}

\makeatletter
\def\th@plain{%
	\thm@notefont{}
	\itshape 
}
\def\th@definition{%
	\thm@notefont{}
	\normalfont 
}

\usepackage[colorlinks=true,linkcolor=blue,citecolor=blue,urlcolor=magenta]{hyperref}
\usepackage[capitalise]{cleveref}

\newtheorem{theorem}{Theorem}[section]
\newtheorem{proposition}[theorem]{Proposition}

\newtheorem{lemma}[theorem]{Lemma}
\newtheorem{fact}[theorem]{Fact}

\theoremstyle{definition}
\newtheorem{definition}[theorem]{Definition}

\newtheorem{remark}[theorem]{Remark}
\newtheorem{example}[theorem]{Example}

\Crefname{fact}{Fact}{Facts}
\Crefname{assumption}{Assumption}{Assumptions}
\Crefname{enumi}{}{}

\numberwithin{equation}{section}

\def\fw{0.99}

\title{A Product Space Reformulation with Reduced Dimension for Splitting Algorithms}

\author{Rub\'en Campoy \thanks{Department of Statistics and Operational Research, Universitat de Val\`encia, Valencia, \textsc{Spain}. E-mail:~\href{mailto:ruben.campoy@uv.es}{ruben.campoy@uv.es}}}

\begin{document}
\maketitle

\begin{abstract}
In this paper we propose a product space reformulation to transform monotone inclusions described by finitely many operators on a Hilbert space into equivalent two-operator problems. Our approach relies on Pierra's classical reformulation with a different decomposition, which results on a reduction of the dimension of the outcoming product Hilbert space. We discuss the case of not necessarily convex feasibility and best approximation problems. By applying existing splitting methods to the proposed reformulation we obtain new parallel variants of them with a reduction in the number of variables. The convergence of the new algorithms is straightforwardly derived with no further assumptions. The computational advantage is illustrated through some numerical experiments.

\paragraph{\small Keywords} Pierra's product space reformulation $\cdot$ Splitting algorithm $\cdot$ Douglas--Rachford algorithm $\cdot$ Monotone inclusions $\cdot$ Feasibility problem $\cdot$ Projection methods
\paragraph{\small MSC\,2020:} 47H05 $\cdot$ 47J25 $\cdot$ 49M27 $\cdot$ 65K10 $\cdot$ 90C30
\end{abstract}

\section{Introduction}

A problem of great interest in optimization and variational analysis is the monotone inclusion consisting in finding a zero of a monotone operator. In many practical applications, such operator can be decomposed as a sum of finitely many maximally monotone operators. The problem takes then the form
\begin{equation}\label{prob:zerosum}
\text{Find } x\in\Hi \text{ such that } 0 \in A_1(x)+A_2(x)+\cdots+A_r(x),
\end{equation}
where $\Hi$ is a Hilbert space and $A_1,A_2,\ldots,A_r:\Hi\tto\Hi$ are maximally monotone. When the sum is itself maximally monotone, in theory, inclusion \eqref{prob:zerosum} could be numerically solved by the well-known \emph{proximal point algorithm} \cite{RockaProx}. However, this method requires the computation of the resolvent of the whole operator at each iteration, which is not usually available. In fact, computing the resolvent of a sum at a given point $q\in\Hi$, i.e.,
\begin{equation}\label{prob:resolvent}
\text{Find } p\in J_{\sum_{i=1}^r A_i}(q),
\end{equation}
where $J_{A}$ denotes the {resolvent} of an operator $A$, is a problem of interest itself which arises in some optimization subroutines as well as in direct applications such as best approximation, image denoising and partial differential equations (see, e.g.,~\cite{aragon2020strengh}).

Splitting algorithms take advantage of the decomposition and activate each operator separately, either by direct evaluation (forward steps) or via its resolvent (backward steps), to construct a sequence that converges to a solution of the problem. Splitting algorithms include, in particular, the so-called \emph{projection methods}, which permit to find a point (or the closest point) in the intersection of a collection of sets by computing individual projections onto them. Classical splitting algorithms for monotone inclusions include the \emph{Forward-Backward algorithm} and its variants, see, e.g.,~\cite{BC17,cevher2019reflected,malitsky2018forward,tseng2000modified}, and the \emph{Douglas--Rachford algorithm}~\cite{DR56,LM79}, among others (see, e.g., \cite[Chapter~23]{BC17}). On the other hand, different splitting algorithms for computing the resolvent of a sum can be found in, e.g,~\cite{adly2019decomposition,aragon2019computing,combettes2009iterative,dao2019resolvent}. See also the recent unifying framework \cite{aragon2020strengh}.

Most splitting algorithms in the literature are devised for a sum of two operators, whereas there exist just a few three-operator extensions, see, e.g.,~\cite{davis2017three,rieger2020backward,ryu}. In general, problems \eqref{prob:zerosum}--\eqref{prob:resolvent} are tackled by splitting algorithms after applying \emph{Pierra's product space reformulation}~\cite{Pierrathesis,Pierra}. This technique constructs an equivalent two-operator problem, embedded in a product Hilbert space, that preserves computational tractability in the sense that the resolvents of the new operators can be readily computed. However, since each operator in the original problem requires one dimension in the product space, this technique may result numerically inefficient when the number of operators is too large.

In this work we propose an alternative reformulation, based on Pierra's classical one, which reduces the dimension of the resulting product Hilbert space. Our approach consists in merging one of the operators with the normal cone to the diagonal set, what allows to remove one dimension in the product space. In fact, this seems a more natural embedding than Pierra's one since it reproduces exactly the original problem when this is initially defined by two operators (see~\Cref{rem:recovers}). We would like to note that this reformulation has already been used in other frameworks. For instance, it was employed in \cite{krugerPS} for deriving necessary conditions for extreme points of a collection of closed sets. Our main contribution is showing that the computability of the resolvents of the new defined operators is kept with no further assumptions. This result allows us to implement known splitting algorithms under this reformulation, what traduces in the elimination of one variable defining the iterative scheme in comparison to Pierra's approach. 

After the publication of the first preprint version of this manuscript we were noticed about \cite{condat}, where the authors suggest an analogous dimension reduction technique for structured optimization problems. Although that reformulation is different, the derived parallel Douglas--Rachford (DR) algorithm seems to lead to a scheme equivalent to the one obtained from \Cref{th:DR} in this context. Notwithstanding, our analysis is developed in the more general framework of monotone inclusions. Furthermore, we provide detailed proofs of the equivalency and resolvents formulas, as well as numerical comparison to the classical Pierra's reformulation. On the other hand, Malitsky and Tam independently proposed in \cite{MT21} another $r$-operator DR-type algorithm embedded in a reduced-dimensional space. This algorithm, which can be seen as an attempt to extend Ryu's splitting algorithm~\cite{ryu} (see \Cref{rem:MT}), differs from the one proposed in this work and it will also be tested in our experiments.

It is worth mentioning that a similar idea for feasibility problems was previously developed~in~\cite{DDHT21}. In there, the dimensionality reduction was obtained by replacing a pair of constraint sets in the original problem by their intersection before applying Pierra's reformulation. However, the convergence of some  projection algorithms may require a particular intersection structure of these sets. Our approach has the advantage of being directly applicable to any splitting algorithm with no additional requirements.
\pagebreak

The remainder of the paper is organized as follows. In \Cref{sec:prelim} we recall some preliminary notions and auxiliary results. Then \Cref{sec:PS} is divided into \Cref{sec:PS_Pierra}, where we first recall Pierra's standard product space reformulation, and \Cref{sec:PS_new}, in which we propose an alternative reformulation with reduced dimension. We discuss and illustrate the particular case of feasibility and best approximation problems in \Cref{sec:Feas}. In \Cref{sec:Algorithms}, we apply our reformulation to construct new parallel variants of some splitting algorithms. Finally, in \Cref{sec:Experiments} we perform some numerical experiments that exhibit the advantage of the proposed reformulation.

\section{Preliminaries}\label{sec:prelim}

Throughout this paper, $\Hi$ is a Hilbert space endowed with inner product $\langle \cdot,\cdot \rangle$ and induced norm $\|\cdot\|$. We abbreviate \emph{norm convergence} of sequences in $\Hi$ with $\to$ and we use $\wto$ for \emph{weak convergence}.

\subsection{Operators}

Given a nonempty set $D\subseteq\Hi$, we denote by $A:D\tto\Hi$ a \emph{set-valued operator} that maps any point $x\in D$ to a set $A(x)\subseteq\Hi$. In the case where $A$ is single-valued we write  $A:D\to\Hi$. The \emph{graph}, the \emph{domain}, the \emph{range} and the set of \emph{zeros} of A, are denoted, respectively, by $\gra A$, $\dom A$, $\ran A$ and $\zer A$; i.e.,
\begin{gather*}
\gra A:=\left\{(x,u)\in\R^n\times\R^n : u\in A(x)\right\},\quad \dom A:=\left\{x\in\R^n : A(x)\neq\emptyset\right\},\\
\ran A:=\left\{x\in\R^n : x\in A(z) \text{ for some } z\in\R^n  \right\} \quad
\text{and} \quad \zer A:=\left\{x\in\R^n : 0\in A(x)\right\}.
\end{gather*}
The \emph{inverse} of $A$, denoted by $A^{-1}$, is the operator defined via its graph by $\gra A^{-1}:=\{(u,x)\in \R^n \times \R^n: u\in A(x)\}$. We denote the \emph{identity} mapping by $\Id$.

\begin{definition}[Monotonicity]\label{def:mono}
An operator $A:\Hi\tto\Hi$ is said to be
\begin{enumerate}[label=(\roman*)]
\item \emph{monotone} if
\begin{equation*}
\langle x-y,u-v\rangle\geq 0,\quad\forall (x,u),(y,v)\in\gra A;
\end{equation*}
Furthermore, $A$ is said to be \emph{maximally monotone} if it is monotone and there exists no monotone operator $B:\Hi\tto\Hi$ such that $\gra B$ properly contains $\gra A$.
\item \emph{uniformly monotone} with modulus $\phi:\R_{+}\to[0,+\infty]$ if $\phi$ is increasing, vanishes only at 0, and
\begin{equation*}
\langle x-y,u-v\rangle\geq \phi\left(\|x-y\|\right),\quad \forall (x,u),(y,v)\in\gra A.
\end{equation*}
\item $\mu$-\emph{strongly monotone} for $\mu>0$, if $A-\mu\Id$ is monotone; i.e.,
\begin{equation*}
 \langle x-y,u-v\rangle\geq \mu\|x-y\|^2,\quad \forall (x,u),(y,v)\in\gra A.
\end{equation*}
\end{enumerate}
\end{definition}
Clearly, strong monotonicity implies uniform monotonicity, which itself implies monotonicity. The reverse implications are not true.
\begin{remark}
The notions in \Cref{def:mono} can be localized to a subset of the the domain. For instance, $A:\Hi\tto\Hi$ is $\mu$-strongly monotone on $C\subseteq \dom A$ if 
\begin{equation*}
\langle x-y,u-v\rangle\geq \mu\|x-y\|^2,\quad \forall x,y\in C, \forall u\in A(x), \forall v\in A(y).
\end{equation*}
\end{remark}

\pagebreak
\begin{lemma}\label{l:sum_unifstrong}
Let $A,B:\Hi\tto\Hi$ be monotone operators. The following hold.
\begin{enumerate}
\item If $A$ is uniformly monotone on $\dom(A+B)$, then $A+B$ is uniformly monotone with the same modulus than $A$.\label{l:sum_unif}
\item If $A$ is $\mu$-strongly monotone on $\dom(A+B)$, then $A+B$ is $\mu$-strongly monotone.\label{l:sum_strong}
\end{enumerate}
\end{lemma}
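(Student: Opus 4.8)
The plan is to argue directly from the definitions, exploiting that the sum of operators acts pointwise so that every element of $(A+B)(x)$ splits as a sum of an element of $A(x)$ and an element of $B(x)$. Since strong monotonicity is exactly uniform monotonicity with modulus $\phi(t)=\mu t^2$, part \ref{l:sum_strong} is the special case of part \ref{l:sum_unif} obtained by substituting this $\phi$, so I would prove \ref{l:sum_unif} and then deduce \ref{l:sum_strong} in one line.

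For \ref{l:sum_unif}, I would start by fixing two arbitrary points $(x,w),(y,z)\in\gra(A+B)$. By definition of the sum, there exist decompositions $w=u_A+u_B$ with $u_A\in A(x)$, $u_B\in B(x)$, and $z=v_A+v_B$ with $v_A\in A(y)$, $v_B\in B(y)$. Crucially, the base points satisfy $x,y\in\dom(A+B)$, since both $(A+B)(x)$ and $(A+B)(y)$ are nonempty. This is the one point where care is needed: the hypothesis only grants uniform monotonicity of $A$ on $\dom(A+B)$, not on all of $\dom A$, so I must verify that the relevant base points indeed lie in this smaller set — which they do, precisely because they come from graph elements of the sum.

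The computation then proceeds by splitting the inner product linearly,
\begin{equation*}
\langle x-y,\,w-z\rangle=\langle x-y,\,u_A-v_A\rangle+\langle x-y,\,u_B-v_B\rangle.
\end{equation*}
The first term is bounded below by $\phi(\|x-y\|)$ using the localized uniform monotonicity of $A$ (valid by the domain remark above), and the second term is nonnegative by the plain monotonicity of $B$. Adding these yields $\langle x-y,\,w-z\rangle\geq\phi(\|x-y\|)$, and since $(x,w),(y,z)$ were arbitrary in $\gra(A+B)$, this establishes that $A+B$ is uniformly monotone with the same modulus $\phi$. Part \ref{l:sum_strong} follows immediately by taking $\phi(t)=\mu t^2$, which recovers the $\mu$-strong monotonicity inequality for $A+B$.

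There is no serious obstacle here; the argument is essentially a one-line estimate once the decomposition is set up. The only genuinely substantive step — more a bookkeeping subtlety than a difficulty — is confirming that the base points of graph elements of $A+B$ belong to $\dom(A+B)$, which is exactly the set on which $A$ is assumed uniformly (resp.\ strongly) monotone; this is what makes the sharp "on $\dom(A+B)$" hypothesis suffice.
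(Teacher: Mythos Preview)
Your proof is correct and matches the paper's argument essentially line for line: decompose graph elements of $A+B$, split the inner product, bound the $A$-part below by $\phi(\|x-y\|)$ using the localized hypothesis and the $B$-part below by $0$ using monotonicity. The only cosmetic difference is that you derive \ref{l:sum_strong} from \ref{l:sum_unif} via $\phi(t)=\mu t^2$, whereas the paper simply declares the proof analogous and omits it.
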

\begin{proof}
Let $(x,u),(y,v)\in\gra(A+B)$, i.e., $u=u_1+u_2$ and $v=v_1+v_1$ with $(x,u_1),(y,v_1)\in \gra A$ and $(x,u_2),(y,v_2)\in \gra B$. \ref{l:sum_unif}: Suppose that $A$ is uniformly monotone on $\dom(A+B)$ with modulus $\phi$. Since $x,y\in\dom(A+B)$, we get that
\begin{equation*}
\langle x-y, u-v\rangle = \langle x-y, u_1-v_1\rangle +\langle x-y, u_2-v_2\rangle \geq \phi(\|x-y\|), 
\end{equation*}
which proves that $A+B$ is uniformly monotone with the same modulus. The proof of \ref{l:sum_strong} is analogous and, thus, omitted.
\end{proof}

\begin{definition}[Resolvent]
The \emph{resolvent} of an operator $A:\Hi\tto\Hi$ with parameter $\gamma>0$ is the operator $J_{\gamma A}:\Hi\tto\Hi$ defined by
\begin{equation*}
J_{\gamma A}:=(\Id+\gamma A)^{-1}.
\end{equation*}
\end{definition}

The resolvent of the sum of two monotone operators has no closed expression in terms of the individual resolvents except for some particular situations. The following fact, which is fundamental in our results, contains one of those special cases.

\begin{fact}\label{fact:ressumcomp}
Let $A,B:\Hi\tto\Hi$ be maximally monotone operators such that $B(y)\subseteq B(J_{A}(y))$, for all $y\in\dom B.$
Then, $A+B$ is maximally monotone and
\begin{equation*}\label{eq:ressumcomp}
J_{A+B}(x)=J_A(J_B(x)),\quad \forall x\in\Hi.
\end{equation*}
\end{fact}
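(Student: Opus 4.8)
The plan is to establish both assertions at once via Minty's theorem, which asserts that a monotone operator $M:\Hi\tto\Hi$ is maximally monotone if and only if $\ran(\Id+M)=\Hi$, in which case $J_M$ is single-valued with full domain. Since $A$ and $B$ are already maximally monotone, $J_A$ and $J_B$ are single-valued and everywhere defined, so for each $x\in\Hi$ the point $q:=J_A(J_B(x))$ makes sense. The heart of the argument is to show that this $q$ is a preimage of $x$ under $\Id+A+B$; this single computation simultaneously yields surjectivity of $\Id+A+B$ (hence maximal monotonicity of $A+B$) and the resolvent formula.

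First I would note that $A+B$ is monotone, being a sum of monotone operators, so Minty's criterion applies once surjectivity is checked. Next, fix $x\in\Hi$ and set $y:=J_B(x)$ and $q:=J_A(y)$. Unwinding the resolvent definitions gives $x-y\in B(y)$ (so in particular $y\in\dom B$) and $y-q\in A(q)$. Writing $x-q=(x-y)+(y-q)$, the second summand already belongs to $A(q)$; the only difficulty is that the first summand lies a priori in $B(y)$ rather than in $B(q)$.

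This is exactly where the hypothesis enters, and it is the one nontrivial step: since $y\in\dom B$ and $q=J_A(y)$, the assumption $B(y)\subseteq B(J_A(y))=B(q)$ transfers the inclusion, so $x-y\in B(q)$. Hence $x-q\in A(q)+B(q)=(A+B)(q)$, that is, $x\in q+(A+B)(q)$. As $x\in\Hi$ was arbitrary, $\ran(\Id+A+B)=\Hi$, and Minty's theorem yields the maximal monotonicity of $A+B$.

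Finally, maximal monotonicity forces $J_{A+B}$ to be single-valued. The computation above shows $q=J_A(J_B(x))\in J_{A+B}(x)$, and since the right-hand side is now a singleton, this gives $J_{A+B}(x)=J_A(J_B(x))$, completing the argument. I expect the main obstacle to be the maximal monotonicity of the sum, which in general requires a constraint qualification; the crux is to bypass the usual domain-qualification machinery by exhibiting an \emph{explicit} preimage under $\Id+A+B$, for which the containment $B(y)\subseteq B(J_A(y))$ is precisely the mechanism that keeps the constructed point inside $\gra(A+B)$.
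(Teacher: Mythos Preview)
Your argument is correct: the construction $q=J_A(J_B(x))$ together with the hypothesis $B(y)\subseteq B(J_A(y))$ yields $x-q\in (A+B)(q)$, and Minty's surjectivity criterion then gives both maximal monotonicity of $A+B$ and the resolvent identity in one stroke. The paper does not actually prove this fact but simply cites \cite[Proposition~23.32(i)]{BC17}; your proof is essentially the standard direct argument behind that reference, so there is nothing to compare.
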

\begin{proof}
See,~e.g., \cite[Proposition~23.32(i)]{BC17}.
\end{proof}

\subsection{Functions}

Let $f:\Hi\to{]-\infty,+\infty]}$ be a proper, lower semicontiuous and convex function. The \emph{subdifferential} of $f$  is the operator $\partial f:\Hi\tto\Hi$ defined by
\begin{equation*}
\partial f(x):=\left\{ u\in\Hi :  \langle y-x, u\rangle +f(x)\leq f(y), \quad \forall y\in\Hi\right\}.
\end{equation*}
The \emph{proximity operator} of $f$ (with~parameter~$\gamma$), $\prox_{\gamma f}:\Hi\tto\Hi$, is  defined at $x\in\Hi$ by
\begin{equation*}
\prox_{\gamma f}(x):=\argmin_{u\in\Hi} \left( f(u)+\frac{1}{2\gamma}\|x-u\|^2\right).
\end{equation*}

\begin{fact}\label{f:res_prox}
Let $f:\Hi\to{]-\infty,+\infty]}$ be proper, lower semicontiuous and convex. Then, the subdifferential of $f$, $\partial f$, is a maximally monotone operator whose resolvent becomes the proximity operator of $f$, i.e.,
\begin{equation*}
J_{\gamma \partial f}(x)=\prox_{\gamma f}(x), \quad \forall x\in\Hi.
\end{equation*}
\end{fact}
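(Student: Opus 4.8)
The plan is to establish the two assertions of the statement — maximal monotonicity of $\partial f$ and the identity $J_{\gamma\partial f}=\prox_{\gamma f}$ — in tandem, since the surjectivity needed for maximality will fall out of the resolvent computation itself. First I would verify that $\partial f$ is monotone directly from the definition: given $u\in\partial f(x)$ and $v\in\partial f(y)$, the two defining inequalities $\langle y-x,u\rangle+f(x)\leq f(y)$ and $\langle x-y,v\rangle+f(y)\leq f(x)$ add to $\langle x-y,u-v\rangle\geq 0$. This part is immediate and uses no more than properness of $f$.

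Next I would compute the resolvent pointwise. Unwinding the definition, $p\in J_{\gamma\partial f}(x)$ is equivalent to $x\in p+\gamma\partial f(p)$, i.e.\ to $\frac{1}{\gamma}(x-p)\in\partial f(p)$. On the other side, $\prox_{\gamma f}(x)$ is the minimizer of $g:=f+\frac{1}{2\gamma}\|x-\cdot\|^2$. Because the quadratic summand is $\frac{1}{\gamma}$-strongly convex, $g$ is proper, lower semicontinuous, strongly convex and coercive, hence admits a unique minimizer $p$, characterized by Fermat's rule $0\in\partial g(p)$. The crucial step is the sum rule $\partial g=\partial f+\frac{1}{\gamma}(\,\cdot\,-x)$, which is licensed by the Moreau--Rockafellar theorem since the quadratic summand is finite and continuous everywhere, so no constraint qualification is needed. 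Thus $0\in\partial f(p)+\frac{1}{\gamma}(p-x)$, i.e.\ $\frac{1}{\gamma}(x-p)\in\partial f(p)$ — exactly the condition characterizing $J_{\gamma\partial f}(x)$. Comparing the two characterizations gives $J_{\gamma\partial f}(x)=\{p\}=\prox_{\gamma f}(x)$ and, in particular, shows that $J_{\gamma\partial f}$ is single-valued with full domain $\Hi$.

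Finally, for maximal monotonicity I would invoke Minty's criterion: a monotone operator $A$ is maximally monotone if and only if $\ran(\Id+A)=\Hi$. Taking $\gamma=1$, the previous paragraph shows that $\prox_{f}(x)$ exists for every $x\in\Hi$, which is precisely the statement $\ran(\Id+\partial f)=\Hi$; combined with the monotonicity established above, Minty's theorem yields maximality. The hard part will be exactly this maximality claim: monotonicity and the resolvent formula are elementary manipulations, whereas maximality rests on the nontrivial existence and uniqueness of the proximal minimizer together with the subdifferential sum rule (equivalently, on Minty's surjectivity theorem). The one point demanding care is verifying the hypothesis of the sum rule, but here it is automatic because the regularizing quadratic is everywhere finite and continuous.
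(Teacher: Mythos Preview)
Your argument is correct: the direct verification of monotonicity, the identification of $J_{\gamma\partial f}$ with $\prox_{\gamma f}$ via Fermat's rule and the Moreau--Rockafellar sum rule, and the appeal to Minty's surjectivity criterion together give a clean, self-contained proof. The one place that merits a word of care is the existence of the proximal minimizer, but you have handled it properly by noting that the regularized function is strongly convex, lower semicontinuous and coercive.

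The paper, however, takes a different route entirely: it does not prove this fact but simply records it with a reference to \cite[Theorem~20.25 and Example~23.3]{BC17}, treating it as standard background. So you have supplied considerably more detail than the paper does. What your approach buys is self-containment; what the paper's approach buys is brevity, since the result is classical and the cited reference already contains exactly the argument you sketch (Moreau's theorem for the proximity operator and Rockafellar's maximal monotonicity of the subdifferential).
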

\begin{proof}
See, e.g., \cite[Theorem~20.25 and Example 23.3]{BC17}.
\end{proof}

\subsection{Sets}
Given a nonempty set $C\subseteq\Hi$, we denote by $d_C$ the \emph{distance function} to $C$; that is, $d_C(x):=\inf_{c\in C}\|c-x\|$, for all $x\in\Hi$. The \index{projection mapping} \emph{projection mapping} (or \emph{projector}) onto $C$ is the possibly set-valued operator $P_C:\Hi\tto C$ defined at each $x\in\Hi$ by
\begin{equation*}
P_C(x):=\left\{p\in C : \|x-p\|=d_C(x)\right\}.
\end{equation*}
Any point $p\in P_C(x)$ is said to be a \emph{best approximation} to $x$ from $C$ (or a \emph{projection} of $x$ onto $C$). If a best approximation in $C$ exists for every point in $\Hi$, then $C$ is said to be \emph{proximinal}. If every point $x\in\Hi$ has exactly one best approximation from $C$, then $C$ is said to be \emph{Chebyshev}. Every nonempty, closed and convex set is Chebyshev (see, e.g., \cite[Theorem 3.16]{BC17}).

The next results characterizes the projection onto a closed affine subspace.

\begin{fact}\label{f:charProj}
Let $D\subseteq\Hi$ be a closed affine subspace and let $x\in\Hi$. Then
$$p=P_D(x) \quad\iff\quad p\in D \,\, \text{ and } \,\, \langle x-p,d-p\rangle= 0,\,\, \forall d\in D.$$
\end{fact}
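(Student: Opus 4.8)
The plan is to prove the standard characterization of the projection onto a closed affine subspace via the variational inequality. The final statement (\Cref{f:charProj}) asserts that for a closed affine subspace $D\subseteq\Hi$ and $x\in\Hi$, the point $p$ is the projection $P_D(x)$ if and only if $p\in D$ and $\langle x-p,\,d-p\rangle=0$ for all $d\in D$. Since $D$ is nonempty, closed, and convex, it is Chebyshev, so $P_D(x)$ is a well-defined single point; this guarantees existence and uniqueness and lets me speak of ``the'' projection.

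First I would establish the $(\Leftarrow)$ direction. Assuming $p\in D$ satisfies the orthogonality condition $\langle x-p,\,d-p\rangle=0$ for all $d\in D$, I want to show $\|x-p\|\le\|x-d\|$ for every $d\in D$. The key step is the Pythagorean expansion: for any $d\in D$,
\begin{equation*}
\|x-d\|^2=\|(x-p)+(p-d)\|^2=\|x-p\|^2+2\langle x-p,\,p-d\rangle+\|p-d\|^2.
\end{equation*}
The middle term vanishes by the hypothesis (taking the condition with $d$ and noting $\langle x-p,\,p-d\rangle=-\langle x-p,\,d-p\rangle=0$), so $\|x-d\|^2=\|x-p\|^2+\|p-d\|^2\ge\|x-p\|^2$, giving $p=P_D(x)$.

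Next I would prove the $(\Rightarrow)$ direction. Suppose $p=P_D(x)$, so in particular $p\in D$ and $\|x-p\|\le\|x-d\|$ for all $d\in D$. Here the affine structure is essential: because $D$ is affine, for any $d\in D$ and any $t\in\R$ the point $p+t(d-p)$ again lies in $D$ (the line through $p$ and $d$ stays in $D$). Plugging $d_t:=p+t(d-p)$ into the minimality inequality and expanding yields
\begin{equation*}
\|x-p\|^2\le\|x-d_t\|^2=\|x-p\|^2-2t\langle x-p,\,d-p\rangle+t^2\|d-p\|^2,
\end{equation*}
hence $0\le -2t\langle x-p,\,d-p\rangle+t^2\|d-p\|^2$ for all $t\in\R$. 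Since this quadratic in $t$ is nonnegative on both sides of $0$, the linear coefficient must vanish, forcing $\langle x-p,\,d-p\rangle=0$; as $d\in D$ was arbitrary, the orthogonality condition holds.

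The only subtle point, and the place to be careful rather than a genuine obstacle, is the use of the affine (rather than merely convex) structure of $D$ in the forward direction: for a general convex set one obtains only the variational inequality $\langle x-p,\,d-p\rangle\le 0$, and it is the ability to take $t$ of both signs (so that $p+t(d-p)\in D$ for $t<0$ as well) that upgrades this inequality to an equality. I would make this explicit by invoking the defining property of an affine subspace. Everything else is the routine Pythagorean/quadratic computation sketched above.
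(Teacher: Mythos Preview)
Your argument is correct. The paper does not actually prove this fact; it simply cites \cite[Corollary~3.22]{BC17}. Your self-contained proof is the standard one (and essentially what the cited reference contains): the Pythagorean identity for $(\Leftarrow)$ and, for $(\Rightarrow)$, exploiting that $p+t(d-p)\in D$ for all $t\in\R$ so the resulting quadratic in $t$ must have vanishing linear term. Your remark distinguishing the affine case (equality) from the merely convex case (variational inequality $\le 0$) is exactly the point behind the cited corollary.
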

\begin{proof}
See, e.g., \cite[Corollary 3.22]{BC17}.
\end{proof}

The \emph{indicator function} of a set $C\subseteq \Hi$, $\iota_C:\Hi\to {]-\infty,+\infty]}$, is defined as
$$\iota_C(x):=\left\{\begin{array}{cl}
0, & \text{if } x\in C,\\
+\infty, & \text{if } x\not\in C.\end{array}\right.$$
If $C$ is closed and convex, $\iota_C$ is convex and its differential turns to the \emph{normal cone} to $C$, which is the operator $N_C:\Hi\tto\Hi$ defined by
\begin{equation*}
N_C(x):=\left\{\begin{array}{ll}\{u\in\Hi : \langle u, c-x \rangle\leq 0, \, \forall c\in C \}, &\text{if }x\in C,\\
\emptyset, & \text{otherwise.}\end{array}\right.
\end{equation*}

\begin{fact}\label{f:normalcone}
Let $C\subseteq \Hi$ be  nonempty, closed and convex. Then, the normal cone to $C$, $N_C$, is a maximally monotone operator whose resolvent becomes the projector onto $C$, i.e.,
\begin{equation*}
J_{\gamma N_C}(x)=P_{C}(x), \quad \forall x\in\Hi.
\end{equation*}
\end{fact}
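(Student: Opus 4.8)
The plan is to identify $N_C$ as the subdifferential of the indicator function $\iota_C$ and then to invoke the connection between subdifferentials, resolvents and proximity operators already established in \Cref{f:res_prox}. Since $C$ is nonempty, closed and convex, the indicator $\iota_C$ is proper, lower semicontinuous and convex, so \Cref{f:res_prox} applies to $f=\iota_C$ and delivers, in one stroke, both the maximal monotonicity of $\partial\iota_C$ and the resolvent formula $J_{\gamma\partial\iota_C}=\prox_{\gamma\iota_C}$.

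First I would verify the identity $\partial\iota_C=N_C$ directly from the definition of the subdifferential. For $x\in C$ one has $\iota_C(x)=0$, so $u\in\partial\iota_C(x)$ means $\langle y-x,u\rangle\leq\iota_C(y)$ for all $y\in\Hi$; this inequality is vacuous when $y\notin C$ and reduces to $\langle u,y-x\rangle\leq 0$ for all $y\in C$, which is precisely the defining condition of $N_C(x)$. For $x\notin C$ we have $\iota_C(x)=+\infty$, so no subgradient exists, matching $N_C(x)=\emptyset$. This identification is already anticipated in the text preceding the statement, and with it \Cref{f:res_prox} immediately yields that $N_C$ is maximally monotone and that $J_{\gamma N_C}=\prox_{\gamma\iota_C}$.

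It then remains to compute $\prox_{\gamma\iota_C}$ and to recognize it as $P_C$. By definition,
\begin{equation*}
\prox_{\gamma\iota_C}(x)=\argmin_{u\in\Hi}\left(\iota_C(u)+\frac{1}{2\gamma}\|x-u\|^2\right).
\end{equation*}
Since $\iota_C$ forces $u\in C$ and is constant (zero) there, the minimization reduces to minimizing $\|x-u\|^2$ over $u\in C$; as the positive factor $\tfrac{1}{2\gamma}$ does not affect the minimizer, this argmin is exactly the set of best approximations to $x$ from $C$, that is, $P_C(x)$. Combining the two facts gives $J_{\gamma N_C}(x)=P_C(x)$ for all $x\in\Hi$.

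I do not expect a genuine obstacle here: the statement is a composition of standard facts, and the only points deserving comment are the elementary verification $\partial\iota_C=N_C$ and the observation that the resolvent is independent of $\gamma$ — which reflects the fact that $N_C$ is a cone, so $\gamma N_C=N_C$ for every $\gamma>0$.
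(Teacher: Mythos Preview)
Your argument is correct: the identification $\partial\iota_C=N_C$ together with \Cref{f:res_prox} yields both maximal monotonicity and $J_{\gamma N_C}=\prox_{\gamma\iota_C}=P_C$, exactly as you describe. The paper itself does not give a proof but simply refers to \cite[Examples~20.26 and 23.4]{BC17}; your route through \Cref{f:res_prox} is precisely the natural unpacking of that citation using material already stated in the paper.
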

\begin{proof}
See, e.g., \cite[Examples~20.26 and 23.4]{BC17}.
\end{proof}

We conclude this section with the following result that characterizes the projector onto the intersection of a proximinal set (not necessarily convex) and a closed affine subspace under particular assumptions. It is a refinement of \cite[Theorem~3.1(c)]{DDHT21}, whose proof needs to be barely modified.

\begin{lemma}\label{fact:Pinterscomp}
Let $C\subseteq\Hi$ be nonempty and proximinal and let $D\subseteq\Hi$ be a closed affine subspace. If $P_C(d)\cap D\neq \emptyset$  for all $d\in D$, then 
$$P_{C\cap D}(x)=P_C(P_D(x))\cap D,\quad  \forall x\in \Hi.$$
\end{lemma}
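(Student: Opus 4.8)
The plan is to exploit the affine structure of $D$ to reduce the computation of $P_{C\cap D}(x)$ to a projection from a point already lying in $D$, and then to show that, on $D$, staying inside $D$ costs nothing in terms of distance to $C$. Throughout I would use that $D$ is Chebyshev, so $P_D$ is single-valued, and I would first note that the hypothesis applied to any $d\in D$ produces a point of $C\cap D$, so this intersection is nonempty.

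First I would record the Pythagorean decomposition attached to the affine subspace $D$: for every $z\in\Hi$ and every $c\in D$,
\[
\|z-c\|^2=\|z-P_D(z)\|^2+\|P_D(z)-c\|^2,
\]
which is immediate from the orthogonality characterization in \Cref{f:charProj}, since the cross term $\langle z-P_D(z),\,P_D(z)-c\rangle$ vanishes. Applying this with $z=x$ while $c$ ranges over $C\cap D\subseteq D$, the constant $\|x-P_D(x)\|^2$ factors out of the minimization, so $x$ and $P_D(x)$ have exactly the same nearest points in $C\cap D$; that is, $P_{C\cap D}(x)=P_{C\cap D}(P_D(x))$. This reduces the statement to proving, for an arbitrary fixed $d\in D$, the identity $P_{C\cap D}(d)=P_C(d)\cap D$.

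To establish this identity I would argue the two inclusions separately, using that $C\cap D\subseteq C$ always gives $d_C(d)\le d_{C\cap D}(d)$. For $\supseteq$: take $p\in P_C(d)\cap D$; then $p\in C\cap D$ and $\|d-p\|=d_C(d)\le d_{C\cap D}(d)\le\|d-p\|$, forcing all terms to be equal, whence $p\in P_{C\cap D}(d)$. This direction is unconditional. The hypothesis $P_C(d)\cap D\neq\emptyset$ enters only in the reverse inclusion: it guarantees that such a $p$ exists, and the computation just made then yields the crucial equality of distances $d_{C\cap D}(d)=d_C(d)$. Consequently any $q\in P_{C\cap D}(d)\subseteq C\cap D$ satisfies $\|d-q\|=d_{C\cap D}(d)=d_C(d)$, so $q\in P_C(d)$, and since $q\in D$ we conclude $q\in P_C(d)\cap D$.

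The main point to get right is precisely the role of the hypothesis. Without it, $C\cap D$ could be strictly farther from $d$ than $C$ is, so that $P_{C\cap D}(d)$ and $P_C(d)\cap D$ genuinely differ; it is the assumption that some nearest point of $C$ already lies in $D$ that collapses $d_C(d)$ and $d_{C\cap D}(d)$ and makes the identity hold. As a by-product this also shows $P_{C\cap D}(d)\neq\emptyset$ for $d\in D$, hence (via the reduction) that $C\cap D$ is proximinal and $P_{C\cap D}$ is everywhere defined. Combining the reduction with the identity on $D$ gives $P_{C\cap D}(x)=P_{C\cap D}(P_D(x))=P_C(P_D(x))\cap D$ for all $x\in\Hi$, as required.
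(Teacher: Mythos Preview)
Your proof is correct and uses essentially the same ingredients as the paper's proof: the Pythagorean identity coming from \Cref{f:charProj} together with the distance comparison $d_C(d)\le d_{C\cap D}(d)$. The organization differs slightly---you first reduce to $x=d\in D$ via $P_{C\cap D}(x)=P_{C\cap D}(P_D(x))$ and then argue the two inclusions on $D$, whereas the paper works directly with a general $x$, picking $c\in P_C(P_D(x))\cap D$ and $p\in P_{C\cap D}(x)$ and showing each lies in the other set---but the underlying argument is the same.
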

\begin{proof}
Fix $x\in\Hi$. By assumption we have that $P_C(P_D(x))\cap D\neq \emptyset$. Pick any $c\in P_C(P_D(x))\cap D$ and let $p\in P_{C\cap D}(x)$. Then $c\in P_C(d)\cap D$, where $d=P_D(x)$. Since $D$ is an affine subspace and $d=P_D(x)$, we derive from \Cref{f:charProj} applied to $c\in D$ and $p\in D$, respectively, that $\langle x-d, c-d\rangle =0$ and $\langle x-d, p-d\rangle =0$. Therefore,
\begin{equation}\label{e:Pinterscomp1}
\|x-c\|^2 = \|x-d\|^2 + \|c-d\|^2 \quad\text{and}\quad \|x-p\|^2 = \|x-d\|^2 + \|p-d\|^2.
\end{equation}
Since $c\in P_C(d)$ and $p\in C$ then $\|c-d\|\leq\|p-d\|$. This combined with \eqref{e:Pinterscomp1} yields $\|x-c\|\leq \|x-p\|$. Note that $p\in P_{C\cap D}(x)$ and $c\in C\cap D$, so it must be
\begin{equation}\label{e:Pinterscomp2}
\|x-c\|=\|x-p\|.
\end{equation}
It directly follows from \eqref{e:Pinterscomp2} that $c\in P_{C\cap D}(x)$. Furthermore, by combining \eqref{e:Pinterscomp2} with \eqref{e:Pinterscomp1} we arrive at $\|c-d\|=\|p-d\|$, which implies that $p\in P_C(d)\cap D$ and concludes the proof.
\end{proof}

\section{Product space reformulation for monotone inclusions}\label{sec:PS}

In this section we introduce our proposed reformulation to convert problems \eqref{prob:zerosum}--\eqref{prob:resolvent}  into equivalent problems with only two operators. To this aim, we first recall the standard product space reformulation due to Pierra~\cite{Pierrathesis,Pierra}.

\subsection{Standard product space reformulation}\label{sec:PS_Pierra}
Consider the product Hilbert space $\Hi^r=\Hi\times\stackrel{(r)}{\cdots}\times\Hi$, endowed with the inner product
\begin{equation*}
\langle \bs x, \bs y \rangle:=\sum_{i=1}^r\langle x_i,y_i\rangle, \quad \forall \bs{x}=(x_1,x_2,\ldots,x_r),\bs{y}=(y_1,y_2,\ldots,y_r)\in\Hi^r,
\end{equation*}
and define
$$\bs D_r:= \left\{ (x,x,\ldots,x)\in \Hi^r : x\in\Hi \right\},$$
which is a closed subspace of $\Hi^r$ commonly known as the \emph{diagonal}. We denote by $\bs{j}_r:\Hi\to\bs{D}_r$ the canonical embedding that maps any $x\in\Hi$ to $\bs{j}_r(x)=(x,x,\ldots,x)\in\bs{D}_r$. 
The following result collects the fundamentals of Pierra's standard product space reformulation.

\begin{fact}[Standard product space reformulation]\label{f:ps}
Let $A_1,A_2,\ldots,A_r:\Hi\tto\Hi$ be maximally monotone and let $\gamma>0$. Define the operator $\bs{A}:\Hi^r\tto\Hi^r$ as
\begin{equation}\label{eq:prod_A}
\bs{A}(\bs{x}):=A_1(x_1)\times A_2(x_2)\times \cdots \times A_r(x_r), \quad \forall \bs{x}=(x_1,x_2,\ldots,x_r)\in\Hi^r.
\end{equation}
Then the following hold.
\begin{enumerate}
\item $\bs A$ is maximally monotone and \label{f:ps_A}
\begin{equation*}
J_{\gamma\bs{A}}(\bs{x})=\left(J_{\gamma A_1}(x_1), J_{\gamma A_2}(x_2),\cdots, J_{\gamma A_r}(x_r)\right), \quad \forall \bs{x}=(x_1,x_2,\ldots,x_r)\in\Hi^r.
\end{equation*}
\item The normal cone to $\bs{D}_r$ is given by \label{f:ps_D}
\begin{equation*}
N_{\bs{D}_r}(\bs{x}) =\left\{\begin{array}{ll}\bs D_{r}^\perp=\{\bs{u}=(u_1,u_2,\ldots,u_r)\in\Hi^r : \sum_{i=1}^r u_i=0 \}, &\text{if }\bs{x}\in \bs{D}_r,\\
\emptyset, & \text{otherwise.}\end{array}\right.
\end{equation*}
It is a maximally monotone operator and
\begin{equation*}
J_{\gamma N_{\bs{D}_r}}(\bs{x})=P_{\bs{D}_r}(\bs{x})=\bs{j}_r\left(\frac{1}{r}\sum_{i=1}^r x_i\right), \quad \forall \bs{x}=(x_1,x_2,\ldots,x_r)\in\Hi^r.
\end{equation*}
\item $\zer\left( \bs{A}+N_{\bs{D}_r}\right)=\bs{j}_r\left(\zer\left(\sum_{i=1}^r A_i\right)\right)$.\label{f:ps_zeros}
\item $J_{\gamma (\bs{A}+N_{\bs{D_r}})}(\bs{x})=\bs{j}_r\left(J_{\frac{\gamma}{r}\sum_{i=1}^{r}A_i}\left( x \right)\right), \quad \forall \bs{x}=\bs j_r(x)\in\bs D_r.$ \label{f:ps_res}
\end{enumerate}
\end{fact}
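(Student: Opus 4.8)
The plan is to prove the four claims in order, handling the first three as short direct verifications and saving the real work for the resolvent identity~(iv). For~(i), I would observe that $\bs A$ acts componentwise, so its monotonicity is immediate by adding the $r$ monotonicity inequalities of the $A_i$, while maximal monotonicity of a finite Cartesian product of maximally monotone operators is standard. The resolvent formula then follows by unraveling the definition: $\bs p\in J_{\gamma\bs A}(\bs x)$ means $\bs x\in\bs p+\gamma\bs A(\bs p)$, which decouples into $x_i\in p_i+\gamma A_i(p_i)$, i.e. $p_i\in J_{\gamma A_i}(x_i)$, for each $i$.

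For~(ii), since $\bs D_r$ is a closed linear subspace, its normal cone equals the orthogonal complement $\bs D_r^\perp$ at points of $\bs D_r$ and is empty elsewhere. I would compute $\bs D_r^\perp$ explicitly: $\bs u\perp\bs D_r$ iff $\sum_i\langle u_i,x\rangle=\langle\sum_i u_i,x\rangle=0$ for every $x\in\Hi$, i.e. iff $\sum_i u_i=0$. Maximal monotonicity of $N_{\bs D_r}$ and the identity $J_{\gamma N_{\bs D_r}}=P_{\bs D_r}$ come directly from \Cref{f:normalcone}, so it only remains to verify the averaging formula for $P_{\bs D_r}$. For this I would use \Cref{f:charProj}: with $\bar x=\frac1r\sum_i x_i$ and $\bs p=\bs j_r(\bar x)$, a one-line computation gives $\langle\bs x-\bs p,\bs d-\bs p\rangle=\langle\sum_i x_i-r\bar x,\,y-\bar x\rangle=0$ for every $\bs d=\bs j_r(y)\in\bs D_r$.

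For~(iii), the point $\bs x$ can be a zero of $\bs A+N_{\bs D_r}$ only if $\bs x\in\dom N_{\bs D_r}=\bs D_r$, say $\bs x=\bs j_r(x)$. Then $\bs 0\in\bs A(\bs j_r(x))+\bs D_r^\perp$ means there exist $u_i\in A_i(x)$ with $(u_1,\dots,u_r)\in\bs D_r^\perp$, that is $\sum_i u_i=0$ with $u_i\in A_i(x)$, which is precisely $0\in\sum_i A_i(x)$; this gives the stated set equality.

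The heart of the argument, and the step I expect to be the main obstacle, is~(iv). A tempting shortcut is \Cref{fact:ressumcomp}, but neither ordering of $\gamma\bs A+\gamma N_{\bs D_r}$ satisfies its absorption hypothesis (projecting onto the diagonal destroys the componentwise structure $\bs A$ needs, whereas the resolvent of $\bs A$ generally leaves the diagonal so that $N_{\bs D_r}$ becomes empty). I would therefore argue directly by inclusion-chasing. Writing $\bs p\in J_{\gamma(\bs A+N_{\bs D_r})}(\bs x)$, nonemptiness of both summands forces $\bs p=\bs j_r(p)\in\bs D_r$, so there are $u_i\in A_i(p)$ and a zero-sum vector $(w_1,\dots,w_r)$ with $x=p+\gamma u_i+w_i$ for each $i$ (using $\bs x=\bs j_r(x)$). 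Summing over $i$ eliminates the $w_i$ and yields $x=p+\frac\gamma r\sum_i u_i$, i.e. $p\in J_{\frac\gamma r\sum_i A_i}(x)$. For the converse I would start from such a $p$, recover the $u_i$, set $w_i:=x-p-\gamma u_i$, and check that $\sum_i w_i=0$ so that the required inclusion holds. This closes the equivalence and delivers the formula as a set identity, with no maximality of $\sum_i A_i$ required.
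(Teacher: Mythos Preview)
Your proposal is correct. The paper itself does not give a proof of this statement: it is recorded as a \emph{Fact} and the proof reads ``See, e.g.,~\cite[Proposition~26.4]{BC17} and \cite[Proposition~4.1]{aragon2019computing}.'' So there is nothing to compare at the level of the paper's own argument. Your direct verification is exactly the kind of elementary argument those references contain; in particular, your inclusion-chasing for~(iv)---forcing $\bs p\in\bs D_r$, writing the componentwise inclusions, summing to kill the zero-sum perturbation, and then reversing via $w_i:=x-p-\gamma u_i$---is precisely the template the paper itself uses later to prove the analogous (new) assertion in \Cref{t:nps}\ref{t:nps_res}. Your observation that \Cref{fact:ressumcomp} does not apply in either ordering here is also apt and worth keeping.
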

\begin{proof}
See, e.g.,~\cite[Proposition~26.4]{BC17} and \cite[Proposition~4.1]{aragon2019computing}.
\end{proof}

According to the previous result, the product space reformulation is a convenient trick for reducing problems \eqref{prob:zerosum}--\eqref{prob:resolvent} to equivalent problems with two operators that keep maximal monotonicity and computational tractability. However, this approach relies on working in a product Hilbert space in which each operator of the problem requires one product dimension. 
This may become computationally inefficient when the number of operators increases.
In the next section we will analyze an alternative reformulation in a product Hilbert space with lower dimension. Before that, we include the following technical result regarding additional monotonicity properties that are inherited by the product operator defined in the standard reformulation.   

\begin{lemma}\label{lem:prod_A_unifstrong}
Let $A_1,A_2,\ldots,A_r:\Hi\tto\Hi$ be monotone operators and let $\bs{A}:\Hi^r\tto\Hi^r$ be the product operator defined in \eqref{eq:prod_A}.
Then the following hold.
\begin{enumerate}
\item If $A_{i}$ is uniformly monotone with modulus $\phi_i$ for all $i\in I_0\subseteq\{1,\ldots,r\}$, then $\bs A$ is uniformly monotone on $\dom(\bs A)\cap \bs D_r$ with modulus $\sum_{i\in I_0}\phi_i(\tfrac{\cdot}{\sqrt{r}})$. \label{lem:prod_A_unif}
\item If $A_i$ is $\mu_i$-strongly monotone for all $i\in I_0\subseteq\{1,\ldots,r\}$, then $\bs A$ is $\mu$-strongly monotone on $\dom(\bs A)\cap \bs D_r$ with $\mu:=\frac{1}{r}\sum_{i\in I_0} \mu_i$. \label{lem:prod_A_strong}
\end{enumerate}
\end{lemma}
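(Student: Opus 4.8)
The plan is to verify both statements directly from the definition of (localised) monotonicity, exploiting the fact that points of $\bs{D}_r$ have all components equal. I would fix $\bs{x},\bs{y}\in\dom(\bs{A})\cap\bs{D}_r$, write $\bs{x}=\bs{j}_r(x)$ and $\bs{y}=\bs{j}_r(y)$ for some $x,y\in\Hi$, and take arbitrary $\bs{u}=(u_1,\dots,u_r)\in\bs{A}(\bs{x})$ and $\bs{v}=(v_1,\dots,v_r)\in\bs{A}(\bs{y})$. By the definition of $\bs{A}$ in \eqref{eq:prod_A}, this amounts to $u_i\in A_i(x)$ and $v_i\in A_i(y)$ for every $i$.

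The core step is to expand the product inner product componentwise,
\[
\langle \bs{x}-\bs{y},\bs{u}-\bs{v}\rangle=\sum_{i=1}^r\langle x-y,u_i-v_i\rangle,
\]
and then bound each summand from below. For $i\in I_0$ I would use uniform monotonicity to get $\langle x-y,u_i-v_i\rangle\geq\phi_i(\|x-y\|)$ (respectively $\mu_i\|x-y\|^2$ in the strongly monotone case), while for $i\notin I_0$ plain monotonicity gives a nonnegative term that can simply be discarded. This produces the lower bound $\sum_{i\in I_0}\phi_i(\|x-y\|)$ (respectively $(\sum_{i\in I_0}\mu_i)\|x-y\|^2$).

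The only place where the restriction to the diagonal is needed is the norm identity $\|\bs{x}-\bs{y}\|^2=\sum_{i=1}^r\|x-y\|^2=r\|x-y\|^2$, whence $\|x-y\|=\|\bs{x}-\bs{y}\|/\sqrt{r}$. Substituting this and invoking that each $\phi_i$ is increasing rewrites the estimate in terms of the product-space norm as $\sum_{i\in I_0}\phi_i(\|\bs{x}-\bs{y}\|/\sqrt{r})$, proving \ref{lem:prod_A_unif}; in the quadratic case the same substitution turns the bound into $\tfrac{1}{r}(\sum_{i\in I_0}\mu_i)\|\bs{x}-\bs{y}\|^2=\mu\|\bs{x}-\bs{y}\|^2$, proving \ref{lem:prod_A_strong}. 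To close \ref{lem:prod_A_unif} I would note that $\sum_{i\in I_0}\phi_i(\tfrac{\cdot}{\sqrt{r}})$ is a genuine modulus, being a finite sum of increasing functions vanishing only at $0$, precomposed with the increasing map $t\mapsto t/\sqrt{r}$.

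I do not expect a real obstacle: the argument is essentially a one-line estimate once the componentwise expansion is set up. The only subtleties requiring care are keeping track of the factor $\sqrt{r}$ relating the norms on $\bs{D}_r$ and on $\Hi$, and remembering to \emph{use} (rather than merely drop) the monotonicity of the operators indexed outside $I_0$ so that those terms cannot spoil the bound.
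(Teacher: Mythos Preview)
Your proposal is correct and follows essentially the same approach as the paper: fix two diagonal points, expand the product inner product componentwise, use (uniform) monotonicity on the summands indexed by $I_0$ and plain monotonicity to discard the rest, then convert $\|x-y\|$ into $\|\bs{x}-\bs{y}\|/\sqrt{r}$. The only cosmetic difference is that the paper deduces \ref{lem:prod_A_strong} from \ref{lem:prod_A_unif} by taking $\phi_i=\mu_i(\cdot)^2$, whereas you redo the computation in the quadratic case.
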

\begin{proof}
Let $\bs x=\bs j_r(x),\bs y=\bs j_r(y)\in\bs D_r$, for some $x,y\in\Hi$, and let $\bs u=(u1,\ldots,u_r), \bs v=(v_1,\ldots,v_r)\in\Hi^r$ such that $(\bs{x},\bs {u}),(\bs y,\bs v)\in \gra\bs{A}$.

\ref{lem:prod_A_unif}: Suppose that $A_{i}$ is uniformly monotone with modulus~$\phi_i$ for all $i\in I_0\subseteq\{1,\ldots,r\}$.  Then,
\begin{align*}
\langle \bs x-\bs y, \bs u-\bs v\rangle  = \sum_{i=1}^r \langle  x-y, u_i-v_i\rangle  & \geq  \sum_{i\in I_0}\langle  x-y, u_{i_0}-v_{i_0}\rangle \\
&  \geq \sum_{i\in I_0} \phi_i(\|x-y\|) = \sum_{i\in I_0}\phi_i\left(\frac{1}{\sqrt{r}}\|\bs x-\bs y\|\right),
\end{align*}
which implies that $\bs A$ is uniformly monotone on $\dom(\bs A)\cap \bs D_r$ with modulus $\sum_{i\in I_0}\phi_i\left(\frac{\cdot}{\sqrt{r}}\right)$.

\ref{lem:prod_A_strong}: Follows from \ref{lem:prod_A_unif} by taking $\phi_i=\mu_i(\cdot)^2$ for all $i\in I_0$.
\end{proof}

\subsection{New product space reformulation with reduced dimension}\label{sec:PS_new}

We introduce now our proposed reformulation technique which permits to eliminate one space in the product with respect to Pierra's classical trick. More specifically, our approach reformulates problems \eqref{prob:zerosum} and \eqref{prob:resolvent} in the product Hilbert space $$\Hi^{r-1}=\Hi\times\stackrel{(r-1)}{\cdots}\times\Hi.$$ To this aim, consider its diagonal $\bs D_{r-1}$, with canonical embedding $\bs j_{r-1}:\Hi\to\bs D_{r-1}$.

\begin{theorem}[Product space reformulation with reduced dimension]\label{t:nps} Let ${\gamma>0}$ and let $A_1,A_2,\ldots,A_r:\Hi\tto\Hi$ be maximally monotone. Consider the operators $\bs B,\bs K:\Hi^{r-1}\tto\Hi^{r-1}$ defined, at each $\bs{x}=(x_1,\ldots,x_{r-1})\in\Hi^{r-1}$, by
\begin{subequations}\label{eq:nps_sets}
\begin{align}
\bs B(\bs x):= & A_1(x_1) \times \cdots \times A_{r-1}(x_{r-1}),\label{eq:nps_A}\\[1em]
\bs K(\bs x):= & \tfrac{1}{r-1}A_r(x_1) \times {\cdots} \times \tfrac{1}{r-1}A_{r}(x_{r-1}) + N_{\bs{D}_{r-1}}(\bs x).\label{eq:nps_K}
\end{align}
\end{subequations}
Then the following hold.
\begin{enumerate}[label=(\roman*)]
\item $\bs B$ is maximally monotone and \label{t:nps_B}
\begin{equation*}
J_{\gamma \bs{B}}(\bs{x})=\left(J_{\gamma A_1}(x_1), \ldots, J_{\gamma A_{r-1}}(x_{r-1})\right), \quad \forall \bs{x}=(x_1,\ldots,x_{r-1})\in\Hi^{r-1}.
\end{equation*}
\item $\bs K$ is maximally monotone and \label{t:nps_K}
\begin{equation*}
J_{\gamma \bs K}(\bs{x})=\bs{j}_{r-1}\left(J_{\frac{\gamma}{r-1}A_r}\left(\frac{1}{r-1}\sum_{i=1}^{r-1} x_i\right)\right), \quad \forall \bs{x}=(x_1,\ldots,x_{r-1})\in\Hi^{r-1}.
\end{equation*}
If, in addition, $A_r$ is uniformly monotone (resp. $\mu$-strongly monotone), then $\bs K$ is uniformly monotone (resp. $\mu$-strongly monotone).
\item $\zer\left( \bs B+\bs K\right)=\bs{j}_{r-1}\left(\zer\left(\sum_{i=1}^r A_i\right)\right)$.\label{t:nps_zeros}
\item $J_{\gamma (\bs B+\bs K)}(\bs{x})=\bs{j}_{r-1}\left(J_{\frac{\gamma}{r-1}\sum_{i=1}^{r}A_i}\left( x \right)\right), \quad \forall \bs{x}=\bs j_{r-1}(x)\in\bs D_{r-1}.$ \label{t:nps_res}
\end{enumerate}
\end{theorem}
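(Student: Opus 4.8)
The plan is to dispatch parts~\ref{t:nps_B} and~\ref{t:nps_K} by reducing them to the standard product-space machinery, and to isolate the genuinely new content in parts~\ref{t:nps_zeros} and~\ref{t:nps_res}. For part~\ref{t:nps_B}, $\bs B$ is nothing but the direct product of the $r-1$ maximally monotone operators $A_1,\dots,A_{r-1}$, so \Cref{f:ps}\ref{f:ps_A} (read with $r-1$ factors) delivers at once its maximal monotonicity and the coordinatewise resolvent. I would then set up part~\ref{t:nps_K} by introducing the auxiliary product operator $\bs{\tilde A}_r(\bs x):=\tfrac1{r-1}A_r(x_1)\times\cdots\times\tfrac1{r-1}A_r(x_{r-1})$, which is again maximally monotone with product resolvent by the same fact (using that $\tfrac1{r-1}A_r$ is maximally monotone), and writing $\bs K=\bs{\tilde A}_r+N_{\bs D_{r-1}}$.

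To compute $J_{\gamma\bs K}$ I would apply \Cref{fact:ressumcomp} to the \emph{ordered} pair $A=\gamma\bs{\tilde A}_r$, $B=\gamma N_{\bs D_{r-1}}$. The absorption hypothesis $B(\bs y)\subseteq B(J_A(\bs y))$ holds for the structural reason that $J_{\gamma\bs{\tilde A}_r}$ maps the diagonal into itself: for $\bs y=\bs j_{r-1}(y)\in\bs D_{r-1}=\dom B$ one has $J_A(\bs y)=\bs j_{r-1}(J_{\frac{\gamma}{r-1}A_r}(y))\in\bs D_{r-1}$, so that $B(\bs y)=B(J_A(\bs y))=\bs D_{r-1}^\perp$. \Cref{fact:ressumcomp} then yields maximal monotonicity of $\gamma\bs K$, hence of $\bs K$, together with $J_{\gamma\bs K}=J_{\gamma\bs{\tilde A}_r}\circ P_{\bs D_{r-1}}$; composing the diagonal projection (\Cref{f:ps}\ref{f:ps_D}) with the product resolvent reproduces the stated expression. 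The uniform/strong monotonicity of $\bs K$ I would obtain by combining \Cref{lem:prod_A_unifstrong} (the product $\bs{\tilde A}_r$ inherits it on the diagonal from $A_r$) with \Cref{l:sum_unifstrong} (adding the monotone $N_{\bs D_{r-1}}$ preserves it), being mindful that the diagonal embedding rescales norms by $\sqrt{r-1}$.

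The crux is part~\ref{t:nps_res}, from which part~\ref{t:nps_zeros} will follow, and I expect the main obstacle to be precisely that \Cref{fact:ressumcomp} is \emph{not} directly available for $\bs B+\bs K$: neither grouping satisfies its hypotheses, since $\bs B$ has heterogeneous coordinates (so $J_{\gamma\bs B}$ does not preserve the diagonal) while the coordinate sums $A_i+\tfrac1{r-1}A_r$ need not be maximally monotone. I would therefore argue directly from the defining inclusions. Because $\dom(\bs B+\bs K)\subseteq\dom N_{\bs D_{r-1}}=\bs D_{r-1}$, every resolvent value is forced onto the diagonal; writing $\bs p=\bs j_{r-1}(p)$ and $\bs x=\bs j_{r-1}(x)$, the relation $\bs x-\bs p\in\gamma(\bs B+\bs K)(\bs p)$ unpacks coordinatewise as $x-p=\gamma a_i+\tfrac{\gamma}{r-1}b_i+\gamma n_i$ with $a_i\in A_i(p)$, $b_i\in A_r(p)$ and $\sum_i n_i=0$. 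Summing over the $r-1$ coordinates annihilates the multipliers, and after dividing by $r-1$ and replacing $\tfrac1{r-1}\sum_i b_i$ by a single element of $A_r(p)$ (legitimate because the value $A_r(p)$ is convex) the terms recombine into $x-p\in\tfrac{\gamma}{r-1}\sum_{i=1}^r A_i(p)$, i.e.\ $p=J_{\frac{\gamma}{r-1}\sum A_i}(x)$. For the reverse inclusion I would start from $x-p=\tfrac{\gamma}{r-1}\sum_{i=1}^r a_i$ and build the coordinate data explicitly, taking $b_i:=a_r$ and $n_i:=\tfrac1\gamma(x-p)-a_i-\tfrac1{r-1}a_r$; the identity $\sum_{i=1}^r a_i=\tfrac{r-1}{\gamma}(x-p)$ is exactly what forces $\sum_i n_i=0$, so $\bs n\in\bs D_{r-1}^\perp=N_{\bs D_{r-1}}(\bs p)$. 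Single-valuedness of resolvents of monotone operators then upgrades these two inclusions to the equality in part~\ref{t:nps_res}. Finally part~\ref{t:nps_zeros} drops out via the fixed-point description $\zer(\bs B+\bs K)=\operatorname{Fix}(J_{\gamma(\bs B+\bs K)})$: every zero lies on $\bs D_{r-1}$, where part~\ref{t:nps_res} reduces the fixed-point equation to $x\in\zer(\sum_{i=1}^r A_i)$, and applying $\bs j_{r-1}$ gives the claim. The one delicate point throughout is the convexity-of-values step, which is what collapses the $r-1$ averaged copies of $A_r$ back into the single operator $A_r$.
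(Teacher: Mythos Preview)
Your proof is correct and follows essentially the same approach as the paper: the same auxiliary product operator $\bs{\tilde A}_r$ (called $\bs S$ in the paper), the same application of \Cref{fact:ressumcomp} for part~\ref{t:nps_K}, and the same coordinatewise summation for parts~\ref{t:nps_zeros}--\ref{t:nps_res}. Two small differences are worth noting: you derive~\ref{t:nps_zeros} from~\ref{t:nps_res} via the fixed-point characterization (the paper proves them independently with parallel arguments), and you make explicit the convexity of $A_r(p)$ needed to collapse $\tfrac{1}{r-1}\sum_i b_i$ into a single element of $A_r(p)$ --- a step the paper performs tacitly when it ``sums the inclusions'' in the forward direction.
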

\begin{proof}
Note that \ref{t:nps_B} directly follows from \Cref{f:ps}\ref{f:ps_A}. For the remaining assertions, let us define the operator $\bs S:\Hi^{r-1}\tto\Hi^{r-1}$ as 
\begin{equation*}
\bs S(\bs x):=  \tfrac{1}{r-1}A_r(x_1) \times {\cdots} \times \tfrac{1}{r-1}A_{r}(x_{r-1}), \quad\forall\bs{x}=(x_1,\ldots,x_{r-1})\in\Hi^{r-1},
\end{equation*}
so that $\bs K=\bs S+N_{\bs D_{r-1}}$.

\ref{t:nps_K}: Fix $\bs{x}=(x_1,\ldots,x_{r-1})\in\Hi^{r-1}$. On the one hand, from \Cref{f:ps}\ref{f:ps_A} we get that $\bs S$ is maximally monotone with
\begin{equation*}
J_{\gamma\bs S}(\bs x)= \left(J_{\frac{\gamma}{r-1}A_r}(x_1), \ldots, J_{\frac{\gamma}{r-1}A_r}(x_{r-1})\right).
\end{equation*}
On the other hand, \Cref{f:ps}\ref{f:ps_D} asserts that
\begin{equation*}
N_{\bs{D}_{r-1}}(\bs{x}) =\left\{\begin{array}{ll}\bs D_{r-1}^\perp=\{\bs{u}=(u_1,\ldots,u_{r-1})\in\Hi^{r-1} : \sum_{i=1}^{r-1} u_i=0 \}, &\text{if }\bs{x}\in \bs{D}_{r-1},\\
\emptyset, & \text{otherwise,}\end{array}\right.
\end{equation*}
is maximally monotone with
\begin{equation}\label{fii_e1}
J_{\gamma N_{\bs{D}_{r-1}}}(\bs{x})=P_{\bs{D}_{r-1}}(\bs{x})=\bs{j}_{r-1}\left(\frac{1}{r-1}\sum_{i=1}^{r-1} x_i\right).
\end{equation}
Now pick any $\bs y\in\dom N_{\bs D_{r-1}}=\bs D_{r-1}$. It must be that $\bs y=\bs j_{r-1}(y)$ for some $y\in\Hi$ and thus
\begin{equation}\label{fii_e2}
J_{\gamma \bs S}(\bs y)=\bs j_{r-1}\left(J_{\frac{\gamma}{r-1}A_r}\left(y\right)\right)\in \bs D_{r-1}.
\end{equation}
Hence, we have that
$$ N_{\bs{D}_{r-1}}\left(\bs y \right) = \bs{D}_{r-1}^\perp = N_{\bs{D}_{r-1}}\left(J_{\gamma \bs S}(\bs y)\right).$$
Since $\bs y$ was arbitrary in $\dom N_{\bs D_{r-1}}$ we can apply \Cref{fact:ressumcomp} to obtain that $\bs S + N_{\bs D_{r-1}}$ is maximally monotone and
$$J_{\gamma \bs K}(\bs x)=J_{\gamma (\bs S + N_{\bs{D}_{r-1}})}(\bs{x})=J_{\gamma\bs S}\left(J_{\gamma N_{\bs{D}_{r-1}}}(\bs{x})\right)=
\bs{j}_{r-1}\left(J_{\frac{\gamma}{r-1}A_r}\left(\frac{1}{r-1}\sum_{i=1}^{r-1} x_i\right)\right),$$
where the last equality follows from combining \eqref{fii_e1} and \eqref{fii_e2}.

If, in addition, $A_r$ is uniformly monotone (resp. $\mu$-strongly monotone), then $\bs S$ is uniformly monotone (resp. $\mu$-strongly monotone) on $\dom(\bs S)\cap \bs D_{r-1}$ according to \Cref{lem:prod_A_unifstrong}\ref{lem:prod_A_unif} (resp. \Cref{lem:prod_A_unifstrong}\ref{lem:prod_A_strong}). Since $N_{\bs{D}_{r-1}}$ is a maximally monotone operator with domain $\bs{D}_{r-1}$, the result follows from \Cref{l:sum_unifstrong}\ref{l:sum_unif} (resp. \Cref{l:sum_unifstrong}\ref{l:sum_strong}).

\ref{t:nps_zeros}: To prove the direct inclusion, take any $\bs x \in \zer\left( \bs{B}+\bs{K}\right)= \zer\left( \bs{B}+\bs{S}+N_{\bs{D}_{r-1}}\right)$. It necessarily holds that $\bs x\in \dom N_{\bs D_{r-1}}=\bs D_{r-1}$, so $\bs x=\bs j_{r-1}(x)$ for some $x\in\Hi$. There exist $\bs u\in \bs B(\bs x)$, $\bs v\in \bs S(\bs x)$ and $\bs w \in N_{\bs D_{r-1}}(\bs x)$ with $\bs u+\bs v + \bs w=0$. By definition of these operators $\bs u=(u_1,\ldots,u_{r-1})$, with $u_i\in A_i(x)$ for $i\in\{1,\ldots,r-1\}$, $\bs v= \bs j_{r-1}(\frac{1}{r-1} v)$, with $v\in A_r(x)$, and $\bs w=(w_1,\ldots,w_{r-1})$, with $\sum_{i=1}^{r-1} w_i=0$. Hence,
$$ u_i+\frac{1}{r-1}v+w_i = 0,\quad \text{for each } i\in\{1,\ldots,r-1\}.$$
Summing up all these equations we arrive at
$$0= \sum_{i=1}^{r-1} \left(u_i+\frac{1}{r-1}v+w_i\right) =  \sum_{i=1}^{r-1} u_i + v \in  \sum_{i=1}^{r} A_i (x),$$
which yields $x\in\zer(\sum_{i=1}^r A_i)$.

For the reverse inclusion, take any $x\in\zer(\sum_{i=1}^r A_i)$ and let $\bs x=\bs j_{r-1}(x)\in\bs D_{r-1}$. Then there exists $u_i\in A_i(x)$, for each $i\in\{1,2,\ldots,r\}$, with $\sum_{i=1}^r u_i = 0$. Define
\begin{align*}
\bs u:= & (u_1,\ldots,u_{r-1})\in \bs B(\bs x),\\
\bs v:= & \bs j_{r-1}\left(\frac{1}{r-1}u_r\right)\in \bs S(\bs x),\\
\bs w:= & -\bs u -\bs v \in \bs D_{r-1}^\perp=N_{\bs D_{r-1}}(\bs x).
\end{align*}
Since $\bs u+\bs v+\bs w = \bs 0$ it follows that $\bs x\in \zer\left( \bs{B}+\bs{S}+N_{\bs{D}_{r-1}}\right)$.

\ref{t:nps_res}: Fix any $x\in\Hi$ and let $\bs x=\bs j_{r-1}(x)\in\bs D_{r-1}$ and $\bs p\in J_{\gamma (\bs{B}+\bs{K}+N_{\bs{D}_{r-1}})}(\bs{x})$. Then $$\bs x\in \bs p + \gamma \bs B(\bs p) + \gamma \bs S(\bs p) + N_{\bs{D}_{r-1}}(\bs p).$$
It must be that $\bs p=\bs j_{r-1}(p)\in \bs D_{r-1}$ for some $p\in\Hi$. Hence, we can rewrite the previous inclusion as
\begin{equation}\label{fiv_e1}
x\in p + \gamma A_i(p) + \frac{\gamma}{r-1}A_r(p) + u_i, \quad \text{for each } i\in\{1,\ldots,r-1\},
\end{equation}
with $\sum_{i=1}^{r-1}u_i=0$. Summing up all the inclusions in \eqref{fiv_e1} and dividing by a factor of $r-1$ we arrive at
\begin{equation*}
x\in p + \frac{\gamma}{r-1}\sum_{i=1}^rA_i(p),
\end{equation*}
which implies that $p \in J_{\frac{\gamma}{r-1}\sum_{i=1}^{r}A_i}\left( x \right)$.

For the reverse inclusion, take any $p\in J_{\frac{\gamma}{r-1}\sum_{i=1}^{r}A_i}\left( x \right)$ so that there exist $a_i\in A_i(x)$, for $i\in\{1,2,\ldots,r\}$, such that
\begin{equation}\label{fiv_e2}
x=p+\frac{\gamma}{r-1}\sum_{i=1}^r a_i \quad\iff\quad 0=\sum_{i=1}^{r-1}\left(x-p-\gamma a_i - \frac{\gamma}{r-1}a_r\right).
\end{equation}
Define the vectors $\bs v:=(a_1,\ldots,a_{r-1})$, $\bs w:=\bs j_{r-1}\left(\tfrac{1}{r-1}a_r\right)$ and $\bs u:=\bs x - \bs p - \gamma\bs v - \gamma \bs w$. Hence, $\bs x=\bs p + \gamma\bs v + \gamma\bs w + \bs u$, with $\bs v\in \bs B(\bs x)$, $ \bs w\in\bs S(\bs x)$ and, in view of \eqref{fiv_e2}, $\bs u\in N_{\bs D_{r-1}}$. This implies that $\bs p\in J_{\gamma (\bs{A}+\bs{S}+N_{\bs{D}_{r-1}})}(\bs{x})$ and concludes the proof.
\end{proof}

\begin{remark}\label{rem:recovers}
Consider problem \eqref{prob:zerosum} with only two operators, i.e.,
\begin{equation}\label{prob:zerosum2op}
\text{Find } x\in\Hi \text{ such that } 0 \in A_1(x)+A_2(x),
\end{equation}
where $A_1,A_2:\Hi\tto\Hi$ are maximally monotone. Although splitting algorithms can directly tackle \eqref{prob:zerosum2op}, the product space reformulations are still applicable. Indeed, the standard reformulation in \Cref{f:ps} produces the problem
\begin{equation}\label{prob:zerosum2op_s}
\text{Find } \bs x\in\Hi^2 \text{ such that } 0 \in \bs A(\bs x)+N_{\bs D_2}(\bs x),
\end{equation}
with $\bs A=A_1\times A_2$. Then \eqref{prob:zerosum2op_s} is equivalent to \eqref{prob:zerosum2op} in the sense that their solution sets can be identified to each other. However, they are embedded in different ambient Hilbert spaces. In contrast, the problem generated by applying~\Cref{t:nps} becomes
\begin{equation}\label{prob:zerosum2op_n}
\text{Find } \bs x\in\Hi \text{ such that } 0 \in \bs B(\bs x)+\bs K(\bs x),
\end{equation}
where $\bs B=A_1$ and $\bs K=A_2+N_{\bs D_1}$. Since $\bs D_1=\Hi$, then $N_{\bs D_1}=\{0\}$ and \eqref{prob:zerosum2op_n} recovers the original problem \eqref{prob:zerosum2op}.
\end{remark}

\section{The case of feasibility and best approximation problems}\label{sec:Feas}

Given a family of sets $C_1,C_2,\ldots,C_r\subseteq \Hi$, the \emph{feasibility problem} aims to find a point in the intersection of the sets, i.e.,
\begin{equation}\label{prob:FP}
\text{Find } x \in \bigcap_{i=1}^r C_i.
\end{equation}
A related problem, known as the \emph{best approximation problem}, consists in finding, not only a point in the intersection, but the closest one to a given point $q\in\Hi$, i.e.,
\begin{equation}\label{prob:BP}
\text{Find } p \in \bigcap_{i=1}^r C_i,\quad\text{such that } \|p-q\|=\inf\{\|x-q\|: x\in\cap_{i=1}^r C_i\}.
\end{equation}
The feasibility problem \eqref{prob:FP} can be seen as a particular instance of the monotone inclusion \eqref{prob:zerosum} when specialized to the normal cones to the sets. Indeed, one can easily check that
\begin{equation*}
x \in \bigcap_{i=1}^r C_i \quad\iff\quad 0\in\sum_{i=1}^r N_{C_i}(x).
\end{equation*}
Similarly, under a constraint qualification, problem \eqref{prob:BP} turns out to be \eqref{prob:resolvent} applied to the normal cones, that is,
\begin{equation*}
p\in P_{\cap_{i=1}^r C_i}(q) \quad\iff\quad p\in J_{\sum_{i=1}^r N_{C_i}}(q).\quad\,
\end{equation*}
According to \Cref{f:normalcone}, if the involved sets  $C_1,C_2,\ldots,C_r$ are closed and convex then $N_{C_i}$ is maximally monotone with $J_{N_{C_i}}=P_{C_i}$, for all $i=1,\ldots,r$. Therefore, \Cref{f:ps,t:nps} can be applied in order to reformulate problems \eqref{prob:FP} and \eqref{prob:BP} as equivalent problems involving only two sets. This is illustrated in the following example.

\begin{example}[Convex feasibility problem]\label{ex:convex}
Consider a feasibility problem consisting of finding a point in the intersection of three closed intervals
\begin{equation}\label{e:ex_prob}
\text{Find } x \in C_1\cap C_2\cap C_3\subseteq\R,
\end{equation}
where $C_1:=[0.5,2]$, $C_2:=[1.5,2]$ and $C_3:=[1,3]$. By applying \Cref{f:ps} to the normal cones $N_{C_1}$, $N_{C_2}$ and $N_{C_3}$, the latter is equivalent to
\begin{equation}\label{e:ex_prob_ps}
\text{Find } (x,x,x) \in (C_1\times C_2\times C_3)\cap \bs D_3\subseteq\R^3,\quad \text{where } \bs D_3=\{(x,x,x): x\in\R\}
\end{equation}
In contrast, if we apply \Cref{t:nps} to the normal cones, it can be easily shown that problem \eqref{e:ex_prob} is also equivalent to
\begin{equation}\label{e:ex_prob_nps}
\text{Find } (x,x) \in (C_1\times C_2)\cap \bs K\subseteq\R^2,\quad \text{where } \bs K=\{(x,x): x\in C_3\}.
\end{equation}
Both reformulations are illustrated in \Cref{fig:feas_convex}. Furthermore, the usefulness of the reformulations is that the projectors onto the new sets can be easily computed. Indeed, the projections onto $C_1\times C_2\times C_3$ or $C_1\times C_2$ are computed componentwise in view of \Cref{f:ps}\ref{f:ps_A}, while the projectors onto $\bs D_3$ and $\bs K$ are derived from \Cref{f:ps}\ref{f:ps_D} and \Cref{t:nps}\ref{t:nps_K}, respectively, as
\begin{subequations}
\begin{align}
P_{\bs D_3}(x_1,x_2,x_3)=\left(\frac{x_1+x_2+x_3}{2},\frac{x_1+x_2+x_3}{2},\frac{x_1+x_2+x_3}{2}\right),\\
P_{\bs K}(x_1,x_2)=P_{C_3\times C_3}\left(P_{\bs D_2}(x_1,x_2)\right)=\left(P_{C_3}\left(\frac{x_1+x_2}{2}\right),P_{C_3}\left(\frac{x_1+x_2}{2}\right)\right).\label{eq:ex_c_pk}
\end{align}
\end{subequations}
Observe that, under a constraint qualification guaranteeing the so-called \emph{strong CHIP} holds (i.e. $N_{C_1}+N_{C_3}+N_{C_3}=N_{C_1\cap C_2\cap C_3}$), the reformulations in \eqref{e:ex_prob_ps} and \eqref{e:ex_prob_nps} can also be applied for best approximation problems in view of \Cref{f:ps}\ref{f:ps_res} and \Cref{t:nps}\ref{t:nps_res}, respectively.
\end{example}

\begin{figure}[!htbp]
\centering
  \begin{subfigure}[c]{0.45\textwidth}
  	\vspace{-0.5cm}
     \includegraphics[height=.9\textwidth]{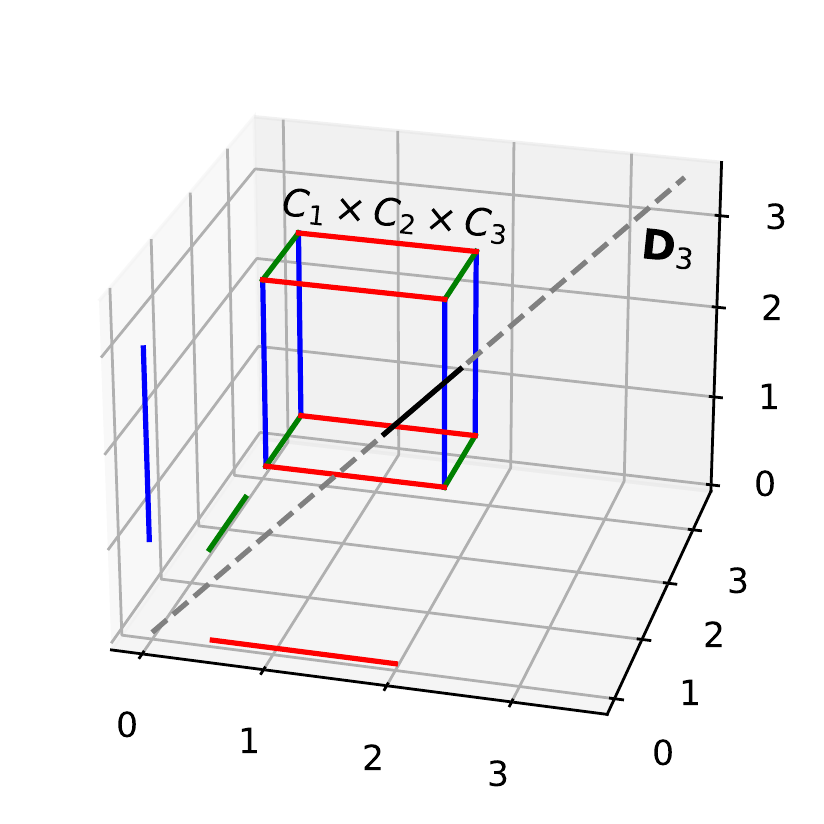}
     \subcaption{Standard product space reformulation}
  \end{subfigure}
  \begin{subfigure}[c]{0.45\textwidth}
     \includegraphics[height=.85\textwidth]{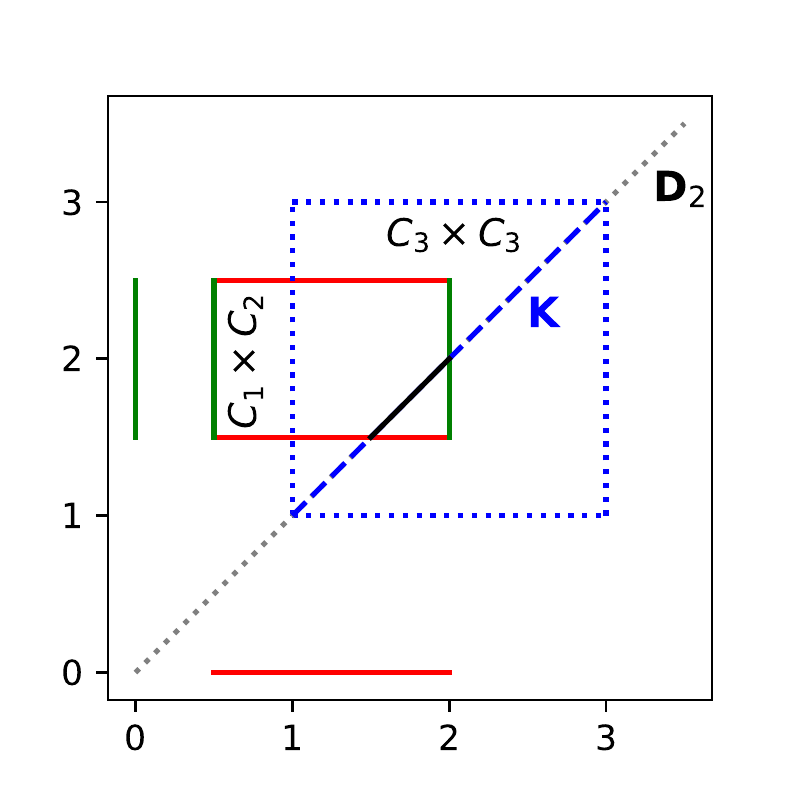}
     \subcaption{Product space reformulation with reduced dimension}
  \end{subfigure}
\caption{Product space reformulations of the convex feasibility problem in \Cref{ex:convex}.}\label{fig:feas_convex}
\end{figure}

Although the theory of projection algorithms is developed under convexity assumptions of the constraint sets, some of them has been shown to be very efficient solvers in a wide variety of nonconvex applications. In special, the Douglas--Rachford algorithm has attracted particular attention due to its well behavior on nonconvex scenarios including some of combinatorial nature; see, e.g.,~\cite{aragon2014recents,aragon2018gc,aragon2020elser,combdes,aragon2020ORclassroom,BCL02,ElserPR, Elser,FHT19,LLS19}. In most of these applications, feasibility problems are described by more than two sets and need to be tackled by Pierra's product space reformulation. Indeed, as we recall in the next result, the reformulation is still valid under the more general assumption that the sets are proximinal but not necessarily convex.

\begin{proposition}[Standard product space reformulation for not necessarily convex feasibility and best approximation problems] Let $C_1,C_2,\ldots,C_r\subseteq \Hi$ be nonempty and proximinal sets and define the product set  \label{p:psf}
\begin{equation}\label{e:psf_set}
\bs C=C_1\times C_2\times\cdots\times C_r \subseteq \Hi^r.
\end{equation}
Then the following hold.
\begin{enumerate}
\item $\bs C$ is proximinal and \label{p:psf_C}
$$P_{\bs C}(\bs x)=P_{C_1}(x_1)\times P_{C_2}(x_2)\times \cdots\times P_{C_r}(x_r),\quad \forall\bs x=(x_1,x_2,\ldots,x_r)\in \Hi^r.$$
If, in addition, $C_1,C_2,\ldots,C_r$ are closed and convex then so is $\bs C$.
\item $\bs D_r$ is a closed subspace with \label{p:psf_D}
$$P_{\bs D_r}(\bs x)=\bs j_{r}\left(\frac{1}{r} \sum_{i=1}^r x_i\right),\quad \forall\bs x=(x_1,x_2,\ldots,x_r)\in \Hi^r.$$
\item $\bs C\cap \bs D_r=\bs j_{r}\left(\cap_{i=1}^r C_i\right)$. \label{p:psf_FP}
\item $P_{\bs C\cap \bs D_r}(\bs x)=\bs j_{r}\left(P_{\cap_{i=1}^r C_i}(x)\right), \quad \forall\bs{x}=\bs{j}_r(x)\in\bs D_{r}$. \label{p:psf_BP}
\end{enumerate}
\end{proposition}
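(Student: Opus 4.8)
The plan is to establish the four assertions in order, with \ref{p:psf_C}--\ref{p:psf_FP} being direct consequences of the product structure and \ref{p:psf_BP} the only one requiring a small observation about the diagonal embedding.

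For \ref{p:psf_C} I would exploit the separability of the squared norm on the product space: for $\bs x=(x_1,\ldots,x_r)$ and $\bs c=(c_1,\ldots,c_r)\in\bs C$ one has $\|\bs x-\bs c\|^2=\sum_{i=1}^r\|x_i-c_i\|^2$, so minimizing over $\bs c\in C_1\times\cdots\times C_r$ decouples into $r$ independent one-coordinate minimizations. Proximinality of each $C_i$ then supplies a best approximation $p_i\in P_{C_i}(x_i)$ in every coordinate; this shows $\bs C$ is proximinal and that $P_{\bs C}(\bs x)=P_{C_1}(x_1)\times\cdots\times P_{C_r}(x_r)$. The closed-and-convex addendum is the standard fact that a finite product of closed (resp.\ convex) sets is closed (resp.\ convex).

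For \ref{p:psf_D}, $\bs D_r$ is plainly a linear subspace, closed as the kernel of the continuous linear map $(x_1,\ldots,x_r)\mapsto(x_1-x_2,\ldots,x_{r-1}-x_r)$, and the projection formula is exactly the content of \Cref{f:ps}\ref{f:ps_D}, so I would simply cite it (alternatively, verify via \Cref{f:charProj} that the averaged point $\bs j_r(\tfrac1r\sum_i x_i)$ satisfies the orthogonality characterization on the subspace $\bs D_r$). Assertion \ref{p:psf_FP} is immediate from the definitions: $\bs x\in\bs C\cap\bs D_r$ forces $\bs x=\bs j_r(x)$ for some $x\in\Hi$ with $x\in C_i$ for every $i$, i.e.\ $x\in\cap_{i=1}^r C_i$, and the converse is equally direct.

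The only point deserving care is \ref{p:psf_BP}. One is tempted to derive it from \Cref{fact:Pinterscomp} with $C=\bs C$ and $D=\bs D_r$, but its hypothesis $P_{\bs C}(\bs d)\cap\bs D_r\neq\emptyset$ for all $\bs d\in\bs D_r$ need not hold here, since it would require a single point lying simultaneously in $P_{C_i}(d)$ for all $i$. Instead I would argue directly from \ref{p:psf_FP}: writing $\bs C\cap\bs D_r=\bs j_r(\cap_{i=1}^r C_i)$ and using that the embedding scales distances, namely $\|\bs j_r(x)-\bs j_r(c)\|^2=\sum_{i=1}^r\|x-c\|^2=r\|x-c\|^2$, the minimization of $\|\bs j_r(x)-\bs c\|$ over $\bs c\in\bs j_r(\cap_i C_i)$ is equivalent to the minimization of $\|x-c\|$ over $c\in\cap_i C_i$. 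Hence $\bs p\in P_{\bs C\cap\bs D_r}(\bs j_r(x))$ if and only if $\bs p=\bs j_r(p)$ with $p\in P_{\cap_i C_i}(x)$, which is the claimed identity, and it holds as a set equality regardless of whether the intersection is nonempty or proximinal. The main obstacle is thus merely recognizing that intersecting with the diagonal collapses the product projection to a single coordinate, so the restrictive hypothesis of \Cref{fact:Pinterscomp} can be bypassed altogether.
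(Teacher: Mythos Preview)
Your proposal is correct and follows essentially the same route as the paper: parts \ref{p:psf_C}--\ref{p:psf_FP} are handled identically, and for \ref{p:psf_BP} the paper also argues directly via \ref{p:psf_FP} and the $\argmin$ characterization (your scaling identity $\|\bs j_r(x)-\bs j_r(c)\|^2=r\|x-c\|^2$ is exactly what underlies the paper's one-line computation). Your remark that \Cref{fact:Pinterscomp} is not applicable here is a correct and worthwhile observation, though the paper does not pause to make it.
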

\begin{proof}
\ref{p:psf_C}: Let $\bs x=(x_1,x_2,\ldots,x_r)\in\Hi^r$. By direct computations on the definition of projector we obtain that
\begin{align*}
P_{\bs C}(\bs x)&  = \argmin_{\bs c\in\bs C}  \|\bs x-\bs c\|^2 = \argmin_{(c_1,c_2,\ldots,c_r)\in \bs C} \sum_{i=1}^r\|x_i-c_i\|^2\\
&=\argmin_{c_1\in C_1}\|x_1-c_1\|^2\times \argmin_{c_2\in C_2}\|x_2-c_2\|^2 \times\cdots\times \argmin_{c_r\in C_r}\|x_r-c_r\|^2\\
&=P_{C_1}(x_1)\times P_{C_2}(x_2)\times \cdots\times P_{C_r}(x_r).
\end{align*}
The remaining assertion easily follows from the definition of (topological) product space.

\ref{p:psf_D}: Follows from \Cref{f:ps}\ref{f:ps_D}.

\ref{p:psf_FP}: Let $\bs x\in\bs C\cap \bs D_r$. Then $\bs x=\bs j_r(x)\in\bs D_r$ with $x\in C_i$ for all $i=1,2,\ldots,r$. The reverse inclusion is also straightforward.

\ref{p:psf_BP}: Let $\bs x=\bs j_r(x)\in \bs D_r$. Reasoning as in \ref{p:psf_C} and taking into account \ref{p:psf_FP} we get that
\begin{align*}
P_{\bs C\cap \bs D_r}(\bs x)&= \argmin_{\bs c\in\bs C\cap\bs D_r}\|\bs x-\bs c\|=\bs j_r\left(\argmin_{c\in \cap_{i=1}^rC_i}\|x-c\|\right)=\bs j_r\left(P_{\cap_{i=1}^rC_i}(x)\right).\qedhere
\end{align*}
\end{proof}

Analogously, we show the validity of the product space reformulation with reduced dimension for feasibility and best approximation problems with arbitrary proximinal sets.

\begin{proposition}[Product space reformulation with reduced dimension for non necessarily convex feasibility and best approximation problems] Let $\allowbreak C_1,C_2,\ldots,$ $C_r\subseteq \Hi$ be nonempty and proximinal sets and define\label{p:npsf}
\begin{subequations}\label{eq:npsf_sets}
\begin{align}
\bs B:= & C_1\times\cdots\times C_{r-1} \subseteq \Hi^{r-1},\label{eq:npsf_A}\\[0.2em]
\bs K:= & \{(x,\ldots,x)\in\Hi^{r-1}: x\in C_r\}\subseteq \Hi^{r-1}.\label{eq:npsf_K}
\end{align}
\end{subequations}
Then the following hold.
\begin{enumerate}
\item $\bs B$ is proximinal and \label{p:npsf_B}
$$P_{\bs B}(\bs x)=P_{C_1}(x_1)\times \cdots\times P_{C_{r-1}}(x_{r-1}),\quad \forall\bs x=(x_1,\ldots,x_{r-1})\in \Hi^{r-1}.$$
If, in addition, $C_1,\ldots,C_{r-1}$ are closed and convex then so is $\bs B$.
\item $\bs K$ is proximinal and \label{p:npsf_K}
$$P_{\bs K}(\bs x)=\bs j_{r-1}\left(P_{C_r}\left(\frac{1}{r-1} \sum_{i=1}^r x_i\right)\right),\quad \forall\bs x=(x_1,\ldots,x_{r-1})\in \Hi^{r-1}.$$
If, in addition, $C_{r}$ is closed and convex then so is $\bs K$.
\item $\bs B\cap \bs K=\bs j_{r-1}\left(\cap_{i=1}^{r} C_i\right)$. \label{p:npsf_FP}
\item $P_{\bs B\cap \bs K}(\bs x)=\bs j_{r-1}\left(P_{\cap_{i=1}^r C_i}(x)\right), \quad\forall\bs{x}=\bs{j}_{r-1}(x)\in\bs D_{r-1}$. \label{p:npsf_BP}
\end{enumerate}
\end{proposition}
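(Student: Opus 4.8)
The plan is to follow the template of the proof of \Cref{p:psf}, handling the four items in turn, since $\bs B$ is again a Cartesian product of the $C_i$ while $\bs K$ is the diagonal restriction of $C_r$. Item \ref{p:npsf_B} is essentially identical to \Cref{p:psf}\ref{p:psf_C}: the squared distance $\|\bs x-\bs c\|^2=\sum_{i=1}^{r-1}\|x_i-c_i\|^2$ separates over the coordinates, so the minimization over $\bs c\in\bs B$ decouples and $P_{\bs B}$ is the Cartesian product of the $P_{C_i}$; proximinality of each $C_i$ yields proximinality of $\bs B$, and closedness/convexity is inherited coordinatewise from the topological/linear structure of the product.

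The crux is item \ref{p:npsf_K}. I would compute the projector onto $\bs K$ directly. Fixing $\bs x=(x_1,\ldots,x_{r-1})$ and writing a candidate point of $\bs K$ as $\bs j_{r-1}(k)$ with $k\in C_r$, completing the square gives
$$\|\bs x-\bs j_{r-1}(k)\|^2=\sum_{i=1}^{r-1}\|x_i-k\|^2=(r-1)\left\|k-\bar x\right\|^2+\left(\sum_{i=1}^{r-1}\|x_i\|^2-(r-1)\|\bar x\|^2\right),$$
where $\bar x:=\tfrac{1}{r-1}\sum_{i=1}^{r-1}x_i$ and the parenthesized term is independent of $k$. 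Hence minimizing over $k\in C_r$ is equivalent to minimizing $\|k-\bar x\|^2$ over $C_r$, i.e.\ to computing $P_{C_r}(\bar x)$; this shows $\bs K$ is proximinal and yields the stated formula $P_{\bs K}(\bs x)=\bs j_{r-1}(P_{C_r}(\bar x))$. (Alternatively one may note $\bs K=(C_r\times\cdots\times C_r)\cap\bs D_{r-1}$ and invoke \Cref{fact:Pinterscomp} with \Cref{f:ps}\ref{f:ps_D}, the hypothesis being met since any $z\in P_{C_r}(y)$ gives $\bs j_{r-1}(z)\in P_{C_r\times\cdots\times C_r}(\bs j_{r-1}(y))\cap\bs D_{r-1}$.) For the closed-and-convex claim, I would use that $\bs j_{r-1}$ is a linear isomorphism of $\Hi$ onto the closed subspace $\bs D_{r-1}$: convexity transfers through the linear embedding, and because $\bs j_{r-1}$ is a homeomorphism onto $\bs D_{r-1}$, the image $\bs K=\bs j_{r-1}(C_r)$ is closed in $\bs D_{r-1}$ and therefore in $\Hi^{r-1}$.

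Items \ref{p:npsf_FP} and \ref{p:npsf_BP} are then short. For \ref{p:npsf_FP}, membership of $\bs x$ in $\bs K$ forces all coordinates equal to a common $x$; membership in $\bs B$ forces $x\in C_i$ for $i\le r-1$, and membership in $\bs K$ forces $x\in C_r$, so $\bs x=\bs j_{r-1}(x)$ with $x\in\cap_{i=1}^r C_i$, the reverse inclusion being immediate. For \ref{p:npsf_BP}, given $\bs x=\bs j_{r-1}(x)$, item \ref{p:npsf_FP} reduces the projection to minimizing $\|\bs j_{r-1}(x)-\bs j_{r-1}(c)\|^2=(r-1)\|x-c\|^2$ over $c\in\cap_{i=1}^r C_i$, whose minimizers are exactly $P_{\cap_{i=1}^r C_i}(x)$, giving the claimed identity.

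The only genuinely delicate point is the closedness of $\bs K$ in item \ref{p:npsf_K}: one must avoid appealing to the false general principle that the image of a closed convex set under a bounded linear map is closed, and instead exploit that here $\bs j_{r-1}$ is an \emph{embedding} onto the closed subspace $\bs D_{r-1}$, hence preserves closedness. Everything else is a routine separation/completing-the-square argument parallel to \Cref{p:psf}.
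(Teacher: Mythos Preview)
Your proof is correct. The arguments for items \ref{p:npsf_B}, \ref{p:npsf_FP} and \ref{p:npsf_BP} are essentially the same as the paper's (the paper simply refers back to \Cref{p:psf} and omits the details). The genuine difference lies in item \ref{p:npsf_K}: the paper's primary route is precisely the alternative you sketch parenthetically, namely writing $\bs K=C_r^{\,r-1}\cap\bs D_{r-1}$ and invoking \Cref{fact:Pinterscomp} together with \Cref{p:psf}\ref{p:psf_C}--\ref{p:psf_D}, whereas your main argument is the direct completing-the-square computation. Your approach is more elementary and self-contained, bypassing the need for \Cref{fact:Pinterscomp} entirely; the paper's approach, on the other hand, illustrates the utility of that lemma and parallels the operator-theoretic proof of \Cref{t:nps}\ref{t:nps_K} via \Cref{fact:ressumcomp}. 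For the closedness/convexity claim the paper also uses the intersection representation ($C_r^{\,r-1}$ closed and convex, $\bs D_{r-1}$ a closed subspace), which is marginally simpler than your embedding argument, though both are perfectly valid.
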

\begin{proof}
\ref{p:npsf_B}: Follows from \Cref{p:psf}\ref{p:psf_C}.

\ref{p:npsf_K}: First, let us rewrite $$\bs K=\bs j_{r-1}(C_r)=  C_r^{\,r-1} \cap \bs D_{r-1} = (C_r\times\stackrel{(r-1)}{\cdots}\times C_r) \cap \bs{D}_{r-1}\subseteq \Hi^{r-1}.$$
Fix $\bs x=(x_1,\ldots,x_{r-1})\in \Hi^{r-1}.$ By \Cref{p:psf}\ref{p:psf_C} and \ref{p:psf_D}, $C_r^{r-1}$ is a proximinal set and $\bs D_{r-1}$ is a closed subspace with
\begin{align*}
P_{C_r^{r-1}}(\bs x)=P_{C_r}(x_1)\times \cdots\times P_{C_r}(x_{r-1})\quad\text{and}\quad
P_{\bs D_{r-1}}(\bs x)=\bs j_{r-1}\left(\frac{1}{r-1} \sum_{i=1}^{r-1} x_i\right).
\end{align*}
Observe that, for any arbitrary point $\bs y=\bs j_{r-1}(y)\in\bs D_{r-1}$, it holds that
$$\bs j_{r-1}(p)\in P_{C_r^{r-1}}(\bs y)\cap \bs D_{r-1},\quad \forall  p\in P_{C_r}(y).$$
In particular, $P_{C_r^{r-1}}(\bs y)\cap \bs D_{r-1}\neq\emptyset$ for all $\bs y \in \bs{D}_{r-1}$. Hence, by applying \Cref{fact:Pinterscomp} we derive that
\begin{align*}
P_{\bs K}(\bs x)&=P_{C_r^{r-1}\cap \bs D_{r-1}}(\bs x)=P_{C_r^{r-1}}\left(P_{\bs D_{r-1}}(\bs x)\right)\cap \bs D_{r-1}\\
&=P_{C_r^{r-1}}\left(\bs j_{r-1}\left(\frac{1}{r-1} \sum_{i=1}^{r-1} x_i\right)\right)\cap \bs D_{r-1}=\bs j_{r-1}\left(P_{C_r}\left(\frac{1}{r-1} \sum_{i=1}^r x_i\right)\right).
\end{align*}
In addition, if $C_r$ is closed and convex then so is $C_r^{r-1}$ according to \Cref{p:psf}\ref{p:psf_C}. Since $\bs D_{r-1}$ is a closed subspace, the convexity and closedness of $\bs K$ follows.

\ref{p:npsf_FP} and \ref{p:npsf_BP}: Their proofs are straightforward and analogous to the proofs of \Cref{p:psf}\ref{p:psf_FP} and \ref{p:psf_BP}, respectively, so they are omitted.
\end{proof}

\begin{example}[Nonconvex feasibility problem]\label{ex:nonconvex}
Consider the feasibility problem
\begin{equation}\label{e:ex_prob_nc}
\text{Find } x \in C_1\cap C_2\cap \widehat C_3\subseteq\R,
\end{equation}
where $C_1:=[0.5,2]$, $C_2:=[1.5,2]$ and $\widehat C_3:=\{1,2,3\}$; that is, the problem considered in \Cref{ex:convex} but replacing $C_3$ by the nonconvex set $\widehat C_3$. According to \Cref{p:psf,p:npsf}, the product space reformulations in \eqref{e:ex_prob_ps} and \eqref{e:ex_prob_nps}, with $C_3$ replaced by $\widehat{C}_3$, are still valid to reconvert \eqref{e:ex_prob_nc} into an equivalent problem described by two sets. Both formulations are illustrated in \Cref{fig:feas_nonconvex}, where now we denote
$$\widehat{\bs K}=\{(x,x): x\in\widehat{C}_3\}=(\widehat{C}_3\times\widehat{C}_3)\cap\bs D_2=\{(1,1), (2,2), (3,3)\}.$$
Due to the nonconvexity, the projector onto $\widehat C_3$ may be set-valued. In view of \Cref{p:npsf}\ref{p:npsf_K}, the projector onto $\widehat{\bs K}$ is described by
\begin{equation*}
P_{\widehat{\bs K}}(x_1,x_2)=\left\{(p,p): p\in P_{\widehat{C}_3}\left(\frac{x_1+x_2}{2}\right)\right\}.
\end{equation*}
We emphasize that, in contrast to \eqref{eq:ex_c_pk}, in the nonconvex case $P_{\widehat{\bs K}}\neq P_{\widehat{C}_3\times \widehat{C}_3}\circ P_{\bs D_2}$. Indeed, consider for instance the point $\bs x:=(2,1)\in\R^2$. Then,
\begin{align*}
P_{\widehat{\bs K}}(\bs x)&=\{(1,1), (2,2)\},\\
P_{\widehat{C}_3\times \widehat{C}_3}(P_{\bs D_2}(\bs x))&=P_{\widehat{C}_3\times \widehat{C}_3}( (1.5,1.5))=\{(1,1),(1,2),(2,1),(2,2)\}.
\end{align*}
Therefore, $P_{\widehat{C}_3\times \widehat{C}_3}(P_{\bs D_2}(\bs x))\neq P_{\widehat{\bs K}}(\bs x)=P_{\widehat{C}_3\times \widehat{C}_3}(P_{\bs D_2}(\bs x))\cap \bs D_2$.
\end{example}

\begin{figure}[!htbp]
\centering
  \begin{subfigure}[c]{0.45\textwidth}
  \vspace{-0.8cm}
     \includegraphics[height=.95\textwidth]{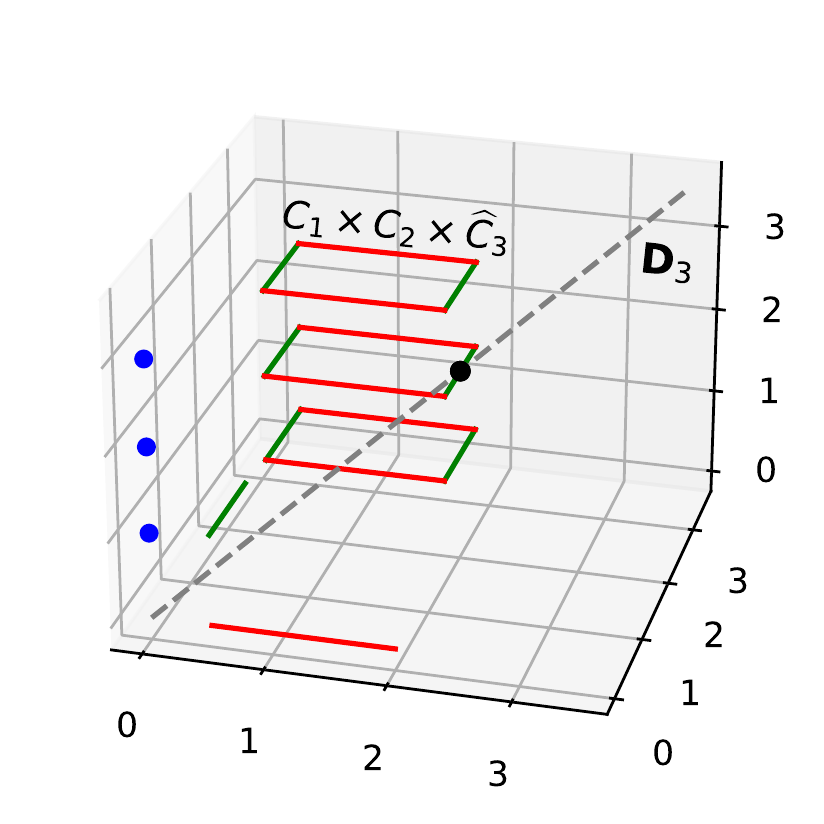}
     \vspace{-0.2cm}
     \subcaption{Standard product space reformulation}
  \end{subfigure}
  \begin{subfigure}[c]{0.45\textwidth}
     \includegraphics[height=.85\textwidth]{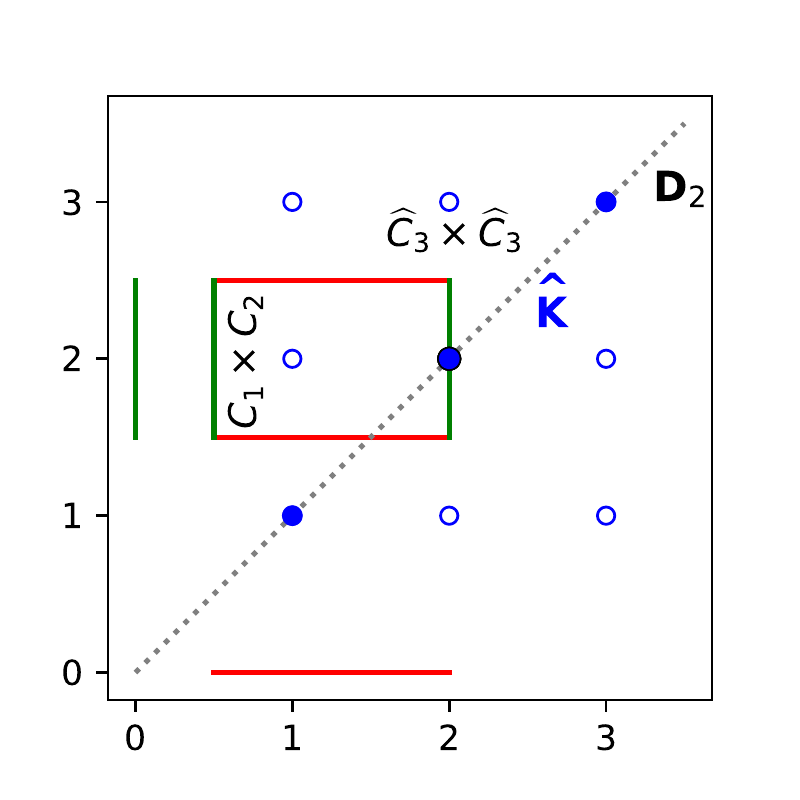}
     \subcaption{Product space reformulation with reduced dimension}
  \end{subfigure}
\caption{Product space reformulations of the nonconvex feasibility problem in \Cref{ex:nonconvex}.}\label{fig:feas_nonconvex}
\end{figure}

\section{Application to splitting algorithms}\label{sec:Algorithms}

In this section, we apply our proposed reformulation in \Cref{t:nps} in order to derive two new parallel splitting algorithms, one for solving problem \eqref{prob:zerosum}, and another one for \eqref{prob:resolvent}. In the first case, we consider the \emph{Douglas--Rachford} (DR) algorithm~\cite{DR56,LM79} (see also \cite{BM17,BM21} for recent results in the inconsistent case). The DR algorithm permits to find a zero of the sum of two maximally monotone operators. When it is applied to Pierra's standard reformulation the resulting method takes the form in \cite[Proposition~26.12]{BC17}. In contrast, if the problem is reformulated via~\Cref{t:nps} we obtain the following iterative scheme, which requires one variable less.

\begin{theorem}[Parallel Douglas/Peaceman--Rachford splitting algorithm]\label{th:DR}
Let $\allowbreak A_1, A_2, \ldots, A_r:\Hi\tto \Hi$ be maximally monotone operators such that $\zer(\sum_{i=1}^r A_i)\neq\emptyset$. Let $\gamma>0$ and let $\lambda\in{]}0,2]$. Given $x_{1,0},\ldots,x_{r-1,0}\in\Hi$, set
\begin{equation}\label{it:DR}
\begin{aligned}
&\text{for } k=0, 1, 2, \ldots:\\
&\left\lfloor\begin{array}{l}
p_k=J_{\frac{\gamma}{r-1}A_r}\left(\frac{1}{r-1}\sum_{i=1}^{r-1} x_{i,k}\right),\\
\text{for } i=1, 2,\ldots, r-1:\\
\left\lfloor\begin{array}{l}
z_{i,k}=J_{\gamma A_i}\left(2p_k-x_{i,k}\right),\\
x_{i,k+1}=x_{i,k}+\lambda\left( z_{i,k}-p_k\right).
\end{array}\right.
\end{array}\right.
\end{aligned}
\end{equation}
Then the following hold.
\begin{enumerate}
\item If $\lambda\in{]0,2[}$, then  $p_k\wto p^\star$ and $z_{i,k}\wto p^\star$, for $i=1,\ldots,r-1$, with $p^\star\in\zer(\sum_{i=1}^r A_i)$.\label{th:DR_I}
\item If $A_r$ is uniformly monotone, then  $p_k\to p^\star$ and $z_{i,k}\to p^\star$, for $i=1,\ldots,r-1$, where $p^\star$ is the unique point in $\zer(\sum_{i=1}^r A_i)$.\label{th:DR_II}
\end{enumerate}
\end{theorem}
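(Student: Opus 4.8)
The plan is to recognize iteration \eqref{it:DR} as nothing but the relaxed Douglas--Rachford algorithm (the Peaceman--Rachford algorithm when $\lambda=2$) applied, in the reduced product space $\Hi^{r-1}$, to the two-operator inclusion $0\in\bs B(\bs x)+\bs K(\bs x)$ furnished by \Cref{t:nps}, and then to invoke the known convergence theory for that algorithm. Concretely, writing $\bs x_k:=(x_{1,k},\ldots,x_{r-1,k})$, I would first check that the three lines of \eqref{it:DR} are exactly
\begin{equation*}
\bs p_k=J_{\gamma\bs K}(\bs x_k),\qquad \bs z_k=J_{\gamma\bs B}(2\bs p_k-\bs x_k),\qquad \bs x_{k+1}=\bs x_k+\lambda(\bs z_k-\bs p_k),
\end{equation*}
where $\bs p_k=\bs j_{r-1}(p_k)$ and $\bs z_k=(z_{1,k},\ldots,z_{r-1,k})$. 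This identification is immediate from the resolvent formulas in \Cref{t:nps}\ref{t:nps_K} (which gives $\bs p_k=\bs j_{r-1}(p_k)$ with the stated $p_k$, the input to $J_{\gamma\bs K}$ being averaged coordinatewise) and \Cref{t:nps}\ref{t:nps_B} (which splits $\bs z_k$ into the $r-1$ componentwise updates $z_{i,k}=J_{\gamma A_i}(2p_k-x_{i,k})$).

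Having made the identification, the well-posedness ingredients are supplied by \Cref{t:nps}: both $\bs B$ and $\bs K$ are maximally monotone, and $\zer(\bs B+\bs K)=\bs j_{r-1}\big(\zer(\sum_{i=1}^r A_i)\big)\neq\emptyset$ by \Cref{t:nps}\ref{t:nps_zeros} together with the standing hypothesis. For \ref{th:DR_I}, I would then quote the standard weak-convergence theorem for the relaxed Douglas--Rachford iteration (e.g.\ \cite[Theorem~26.11]{BC17}): for constant $\lambda\in{]0,2[}$ the governing condition $\sum_k\lambda(2-\lambda)=+\infty$ holds, so the two shadow sequences $\bs p_k$ and $\bs z_k$ converge weakly to a common point $\bs p^\star\in\zer(\bs B+\bs K)$ (one uses here the standard fact that $\bs p_k-\bs z_k\to\bs 0$). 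Since $\bs p^\star\in\bs j_{r-1}(\zer(\sum_i A_i))$, we have $\bs p^\star=\bs j_{r-1}(p^\star)$ with $p^\star\in\zer(\sum_i A_i)$; applying the (weakly continuous) coordinate projections to $\bs p_k\wto\bs p^\star$ and $\bs z_k\wto\bs p^\star$ then yields $p_k\wto p^\star$ and $z_{i,k}\wto p^\star$ for every $i$.

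For \ref{th:DR_II}, the extra input is \Cref{t:nps}\ref{t:nps_K}: uniform monotonicity of $A_r$ upgrades $\bs K$ to a uniformly monotone operator, while \Cref{l:sum_unifstrong}\ref{l:sum_unif} shows $\sum_{i=1}^r A_i$ is uniformly monotone, hence has the unique zero $p^\star$. The plan is to invoke the strong-convergence counterpart of the Douglas--Rachford theory under uniform monotonicity of one operator to promote the weak limits above to strong ones, and then transfer through the coordinate projections as before. The delicate point---and the step I expect to be the main obstacle---is that \ref{th:DR_II} is asserted for all $\lambda\in{]0,2]}$, including the Peaceman--Rachford endpoint $\lambda=2$, which lies \emph{outside} the range $\sum_k\lambda(2-\lambda)=+\infty$ covered by the relaxed-Douglas--Rachford theorem. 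For $\lambda=2$ the reflection composition $R_{\gamma\bs B}R_{\gamma\bs K}$ is merely nonexpansive, and convergence must instead be extracted from the contractive surplus that uniform monotonicity of $\bs K$ confers on the reflected resolvent $R_{\gamma\bs K}$; thus a dedicated Peaceman--Rachford strong-convergence statement, rather than a citation to the relaxed-Douglas--Rachford theorem, is what closes this case.
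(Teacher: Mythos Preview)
Your proposal is correct and follows essentially the same route as the paper: identify \eqref{it:DR} with the relaxed Douglas--Rachford iteration for $\bs B$ and $\bs K$ in $\Hi^{r-1}$ via \Cref{t:nps}, then cite \cite[Theorem~26.11]{BC17} for \ref{th:DR_I} and the uniform-monotonicity clauses of that result together with the dedicated Peaceman--Rachford statement \cite[Proposition~26.13]{BC17} for the endpoint $\lambda=2$ in \ref{th:DR_II}. You correctly anticipated that the $\lambda=2$ case requires a separate citation rather than the relaxed-DR theorem.
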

\begin{proof}
Consider the product Hilbert space $\Hi^{r-1}$ and let $\bs B,\bs K:\Hi^{r-1}\tto\Hi^{r-1}$ be the operators defined in \eqref{eq:nps_sets}. By \Cref{t:nps}\ref{t:nps_B}, \ref{t:nps_K} and \ref{t:nps_zeros}, we get that $\bs B$ and $\bs K$ are maximally monotone with $\zer(\bs B+\bs K)=\bs j_{r-1}(\zer(\sum_{i=1}^r A_i))\neq\emptyset$. For each $k=0,1,2,\ldots$, set $\bs x_{k}:=(x_{1,k},\ldots,x_{r-1,k}),\bs z_{k}:=(z_{1,k},\ldots,z_{r-1,k})\in\Hi^{r-1}$ and $\bs p_k=\bs j_{r-1}(p_k)\in\bs D_{r-1}$. Hence, according to \Cref{t:nps}\ref{t:nps_B} and \ref{t:nps_K}, we can rewrite \eqref{it:DR} as 
\begin{equation}\label{it:DR_nps}
\begin{aligned}
&\text{for } k=0, 1, 2, \ldots:\\
&\left\lfloor\begin{array}{l}
\bs p_k=J_{\gamma \bs K}(\bs x_k),\\
\bs z_{k}=J_{\gamma \bs B}\left(2\bs p_k-\bs x_{k}\right),\\
\bs x_{k+1}=\bs x_{k}+\lambda\left( \bs z_{k}-\bs p_k\right).
\end{array}\right.
\end{aligned}
\end{equation}
Note that \eqref{it:DR_nps} is the Douglas--Rachford (or Peaceman--Rachford) iteration applied to the operators $\bs B$ and $\bs K$. If $\lambda\in{]0,2[}$, we apply \cite[Theorem~26.11(iii)]{BC17} to obtain that $\bs p_k\wto\bs p^\star$ and $\bs z_k\wto\bs p^\star$, with $\bs p^\star\in\zer(\bs B+\bs K)$. Hence, $\bs p^\star=\bs j_{r-1}(p^\star)$ with $p^\star\in \zer\left(\sum_{i=1}^r A_i\right)$, which implies \ref{th:DR_I}. 

Suppose in addition that $A_r$ is uniformly monotone. Then so is $\bs K$ according to \Cref{t:nps}\ref{t:nps_K}. Hence, \ref{th:DR_II} follows from \cite[Theorem~26.11(vi)]{BC17}, when $\lambda\in{]0,2[}$, and \cite[Proposition~26.13]{BC17} when $\lambda=2$. 
\end{proof}

\begin{remark}[Frugal resolvent splitting algorithms with minimal lifting] \label{rem:MT}
Consider the problem of finding a zero of the sum of three maximally monotone operators $A,B,C:\Hi\tto\Hi$. The classical procedure to solve it has been to employ the standard product space reformulation (\Cref{f:ps}) to construct a DR algorithm on $\Hi^3$. The question of whether it is possible to generalize the DR algorithm to three operators without \emph{lifting}, that is, without enlarging the ambient space, was solved with a negative answer by Ryu in~\cite{ryu}. The generalization is considered in the sense of devising a \emph{frugal} splitting algorithm which uses the resolvent of each operator exactly once per iteration. In the same work, the author demonstrated that the minimal lifting is $2$-fold (in $\Hi^2$) by providing the following splitting algorithm. Given $\lambda\in{]0,1[}$ and $x_0,y_0\in\Hi$, set
\begin{equation}\label{it:Ryu}
\begin{aligned}
&\text{for } k=0, 1, 2, \ldots:\\
&\left\lfloor\begin{array}{l}
u_k = J_{\gamma A}(x_k)\\
v_k = J_{\gamma B}(u_k+y_k)\\
w_k = J_{\gamma C}(u_k-x_k+v_k-y_k)\\
x_{k+1} = x_k + \lambda(w_k-u_k)\\
y_{k+1} = y_k + \lambda(w_k-v_k).
\end{array}\right.
\end{aligned}
\end{equation}
Then $u_k\wto w^\star$, $v_k\wto w^\star$ and $w_k\wto w^\star$, with $w^\star\in\zer(A+B+C)$ (see \cite[Theorem~4]{ryu} or \cite[Appendix A]{aragon2020strengh} for an alternative proof in an infinite-dimensional space).

A few days after the publication of the first preprint version of this manuscript ({ArXiv: \href{https://arxiv.org/abs/2107.12355}{2107.12355}}), Malitsky and Tam \cite{MT21} generalized Ryu's result by showing that for an arbitrary number of $r$ operators the minimal lifting is $(r-1)$-fold. In addition, they proposed another frugal splitting algorithm that attains this minimal lifting, whose iteration is described as follows. Given $\lambda\in{]0,1[}$ and $\bs z_0=(z_{1,0},\ldots,z_{r-1,0})\in\Hi^{r-1}$, set
\begin{equation}\label{it:MT}
\begin{aligned}
&\text{for } k=0, 1, 2, \ldots:\\
&\left\lfloor\begin{array}{l}
\text{Compute } \bs z_{k+1}=(z_{1,k+1},\ldots,z_{r-1,k+1})\in\Hi^{r-1} \text{ as }\\
\bs z_{k+1}=\bs z_k + \lambda \left(\begin{array}{c}
x_{2,k}-x_{1,k}\\
x_{3,k}-x_{2,k}\\
\vdots\\
x_{r,k}-x_{r-1,k}
\end{array}\right),\\
\text{where } \bs x_k=(x_{1,k},x_{2,k},\ldots,x_{r,k})\in\Hi^r \text{ is given by }\\
\hspace{2ex}x_{1,k}=J_{\gamma A_1}(z_{1,k}),\\
\hspace{2ex}\text{for } k=2,\ldots,r-1:\\
\hspace{2ex}\left\lfloor\begin{array}{l}
\hspace{2ex}x_{i,k}=J_{\gamma A_i}(z_{i,k}-z_{i-1,k}+x_{i-1,k}),
\end{array}\right.\\
\hspace{2ex}x_{r,k}=J_{\gamma A_r}(x_{1,k}+x_{r-1,k}-z_{r-1,k}).
\end{array}\right.
\end{aligned}
\end{equation}
Then,  for each $i\in\{1,\ldots,r\}$, $x_{i,k}\wto x^*\in \zer(\sum_{j=1}^r A_j)$ (see \cite[Theorem 4.5]{MT21}).

It is worth to notice that the Malitsky--Tam iteration \eqref{it:MT} does not generalize Ryu's scheme \eqref{it:Ryu}, which seems to be difficult to extend to more than three operators as explained in \cite[Remark~4.7]{MT21}. Furthermore, both of these algorithms are different from the one in \Cref{th:DR}. The main conceptual difference is that \eqref{it:MT} can be implemented in a distributed decentralized way whereas algorithm \eqref{it:DR} uses the operator $A_r$ as a central coordinator (see~\cite[\S~5]{MT21}). Nevertheless, for the applications considered in this work, the dimensionality reduction obtained through the new product space reformulation seems to be more effective for accelerating the converge of the algorithm, especially when the number of operators is large as we shall show in \Cref{sec:Experiments}.
\end{remark}

We now turn our attention into splitting algorithms for problem \eqref{prob:resolvent}. In particular, we concern on the \emph{averaged alternating modified reflections (AAMR) algorithm}, originally proposed in \cite{aragon2018new} for best approximation problems, and later extended in \cite{aragon2019computing} for monotone operators (see also~\cite{alwadani2018asymptotic,aragon2020strengh}). The parallel AAMR splitting iteration obtained from Pierra's reformulation is given in~\cite[Theorem~4.1]{aragon2019computing}. As we show in the following result, we can avoid one of the variables defining the iterative scheme if we use the product space reformulation~in~\Cref{t:nps}.

\begin{theorem}[Parallel AAMR splitting algorithm]\label{th:par_AAMR}
Let $\allowbreak A_1, A_2, \ldots, A_r:\Hi\tto \Hi$ be maximally monotone operators, let $\gamma>0$ and let $\lambda\in{]0,2]}$. Let $\beta\in{]0,1[}$ and suppose that $q\in\ran\left(\Id+\frac{\gamma}{2(1-\beta)(r-1)}\sum_{i=1}^rA_i\right)$. Given $x_{1,0},\ldots,x_{r-1,0}\in\Hi$, set
\begin{equation}\label{it:AAMR}
\begin{aligned}
&\text{for } k=0, 1, 2, \ldots:\\
&\left\lfloor\begin{array}{l}
p_k=J_{\frac{\gamma}{r-1}A_r}\left(\frac{\beta}{r-1}\sum_{i=1}^{r-1} x_{i,k}+(1-\beta)q\right),\\
\text{for } i=1, 2,\ldots, r-1:\\
\left\lfloor\begin{array}{l}
z_{i,k}=J_{\gamma A_i}\left(\beta(2p_k-x_{i,k})+(1-\beta)q\right),\\
x_{i,k+1}=x_{i,k}+\lambda\left( z_{i,k}-p_k\right).
\end{array}\right.
\end{array}\right.
\end{aligned}
\end{equation}
Then $\left(p_k\right)_{k=0}^\infty$ converges strongly to $J_{\frac{\gamma}{2(1-\beta)(r-1)}\sum_{i=1}^rA_i}(q)$.
\end{theorem}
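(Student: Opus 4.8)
The plan is to follow the template of the proof of \Cref{th:DR}: lift the recursion to the reduced product space $\Hi^{r-1}$, recognise \eqref{it:AAMR} as the two-operator AAMR iteration applied to the maximally monotone operators $\bs B$ and $\bs K$ of \eqref{eq:nps_sets}, and then invoke the known strong convergence of AAMR together with the resolvent identity \Cref{t:nps}\ref{t:nps_res} to transfer both the constraint qualification and the limit back to $\Hi$. First I would set $\bs q:=\bs j_{r-1}(q)$ and, for every $k$, collect $\bs x_k:=(x_{1,k},\ldots,x_{r-1,k})$, $\bs z_k:=(z_{1,k},\ldots,z_{r-1,k})$ and $\bs p_k:=\bs j_{r-1}(p_k)\in\bs D_{r-1}$.

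By the resolvent formula for $\bs K$ in \Cref{t:nps}\ref{t:nps_K} evaluated at $\beta\bs x_k+(1-\beta)\bs q$, the first line of \eqref{it:AAMR} is precisely $\bs p_k=J_{\gamma\bs K}(\beta\bs x_k+(1-\beta)\bs q)$; by the componentwise formula for $\bs B$ in \Cref{t:nps}\ref{t:nps_B} evaluated at $\beta(2\bs p_k-\bs x_k)+(1-\beta)\bs q$, the inner loop collapses to $\bs z_k=J_{\gamma\bs B}(\beta(2\bs p_k-\bs x_k)+(1-\beta)\bs q)$; and the update reads $\bs x_{k+1}=\bs x_k+\lambda(\bs z_k-\bs p_k)$. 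These three lines are exactly the AAMR iteration for the pair $(\bs K,\bs B)$ with anchor $\bs q$ and parameters $\beta,\lambda$, both operators being maximally monotone by \Cref{t:nps}\ref{t:nps_B}--\ref{t:nps_K}.

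Next I would transfer the qualification. Applying \Cref{t:nps}\ref{t:nps_res} with $\gamma$ replaced by $\tfrac{\gamma}{2(1-\beta)}$, the hypothesis $q\in\ran(\Id+\tfrac{\gamma}{2(1-\beta)(r-1)}\sum_{i=1}^rA_i)$ --- equivalently $J_{\frac{\gamma}{2(1-\beta)(r-1)}\sum_{i=1}^rA_i}(q)\neq\emptyset$ --- is equivalent to $\bs q\in\ran(\Id+\tfrac{\gamma}{2(1-\beta)}(\bs B+\bs K))$, which is precisely the constraint qualification demanded by the AAMR convergence theorem \cite{aragon2019computing} for the pair $\bs B,\bs K$. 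That theorem then yields $\bs p_k\to\bs p^\star:=J_{\frac{\gamma}{2(1-\beta)}(\bs B+\bs K)}(\bs q)$ strongly, and \Cref{t:nps}\ref{t:nps_res} identifies $\bs p^\star=\bs j_{r-1}(p^\star)$ with $p^\star=J_{\frac{\gamma}{2(1-\beta)(r-1)}\sum_{i=1}^rA_i}(q)$. Since $\bs p_k=\bs j_{r-1}(p_k)$ and $\|\bs j_{r-1}(x)\|=\sqrt{r-1}\,\|x\|$ for all $x\in\Hi$, the strong convergence $\bs p_k\to\bs p^\star$ is equivalent to $p_k\to p^\star$, which is the assertion.

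Almost everything here is bookkeeping inherited from \Cref{th:DR}; the one genuinely delicate step is the verbatim identification of the lifted recursion with AAMR, namely matching the modified-reflection structure (the convex blends $\beta(\cdot)+(1-\beta)\bs q$ and the reflection term $2\bs p_k-\bs x_k$) and reconciling the parameter conventions, so that the cited strong-convergence statement --- in particular its validity across the whole range $\lambda\in{]0,2]}$, including the Peaceman--Rachford endpoint $\lambda=2$ --- applies without modification.
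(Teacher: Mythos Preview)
Your proposal is correct and follows essentially the same route as the paper's own proof: lift the iteration to $\Hi^{r-1}$ via $\bs j_{r-1}$, identify it with the two-operator AAMR scheme applied to $(\bs K,\bs B)$ using \Cref{t:nps}\ref{t:nps_B}--\ref{t:nps_K}, transfer the range condition and the limit through \Cref{t:nps}\ref{t:nps_res}, and conclude by invoking the known AAMR strong-convergence result. The only cosmetic difference is that the paper cites \cite[Theorem~6 and Remark~10(i)]{aragon2020strengh} rather than \cite{aragon2019computing} for the two-operator AAMR convergence covering the full range $\lambda\in{]0,2]}$.
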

\begin{proof}
Consider the product Hilbert space $\Hi^{r-1}$ and let $\bs B,\bs K:\Hi^{r-1}\tto\Hi^{r-1}$ be the operators defined in \eqref{eq:nps_sets}. We know that $\bs B$ and $\bs K$ are maximally monotone by \Cref{t:nps}\ref{t:nps_B} and \ref{t:nps_K}, respectively. Set $\bs x_{k}:=(x_{1,k},\ldots,x_{r-1,k}),\bs z_{k}:=(z_{1,k},\ldots,z_{r-1,k})\in\Hi^r$ and $\bs p_k=\bs j_{r-1}(p_k)\in\bs D_{r-1}$, for each $k=0,1,2,\ldots$, and set $\bs q:=\bs j_{r-1}(q)\in\bs D_{r-1}$. On the one hand, according to \Cref{t:nps}\ref{t:nps_B} and \ref{t:nps_K}, we can rewrite \eqref{it:AAMR} as
\begin{equation*}
\begin{aligned}
&\text{for } k=0, 1, 2, \ldots:\\
&\left\lfloor\begin{array}{l}
    \bs p_k = J_{\gamma \bs K}\left(\beta \bs x_k + (1-\beta)\bs q\right) \\
    \bs z_k = J_{\gamma \bs B}\left(\beta(2\bs p_k-\bs x_k)+(1-\beta)\bs q\right) \\
    \bs x_{k+1} = \bs x_k + {\lambda}(\bs z_k-\bs p_k). 
\end{array}\right.
\end{aligned}
\end{equation*}
On the other hand, from \Cref{t:nps}\ref{t:nps_res} we obtain that
\begin{equation*}\label{eq:ressum_beta}
J_{\frac{\gamma}{2(1-\beta)} (\bs B+\bs K)}(\bs{q})=\bs{j}_{r-1}\left(J_{\frac{\gamma}{2(1-\beta)(r-1)}\sum_{i=1}^{r}A_i}\left( q \right)\right).
\end{equation*}
In particular, the latter implies that $\bs q\in\ran\left(\Id+ \frac{\gamma}{2(1-\beta)} (\bs B+\bs K)\right)$. Hence, by applying \cite[Theorem~6 and Remark 10(i)]{aragon2020strengh}, we conclude that $\left(\bs p_k\right)_{k=0}^\infty$ converges strongly to $J_{\frac{\gamma}{2(1-\beta)} (\bs B+\bs K)}(\bs{q})$ and the result follows.
\end{proof}

\begin{remark}[On Forward-Backward type methods] Forward-Backward type methods permit to find a zero in $A+B$ when $A:\Hi\to\Hi$ is cocoercive (see, e.g., \cite[Theorem~26.14]{BC17}) or Lipschitz continuous (see, e.g., \cite{cevher2019reflected,malitsky2018forward,tseng2000modified}) and $B:\Hi\tto\Hi$ is maximally monotone. These algorithms make use of direct evaluations of $A$ (forward steps) and resolvent computations of $B$ (backward steps). When dealing with finitely many operators of both nature (single-valued and set-valued), Pierra's reformulation (\Cref{f:ps}) yields parallel algorithms which need to activate all of them through their resolvents, since all of them are combined into the product operator $\bs A$ in \eqref{eq:prod_A}.
In contrast, the product space reformulation in \Cref{t:nps} allows to deal with the case when $A_1,\ldots,A_{r-1}:\Hi\to\Hi$ are cocoercive/Lipschitz continuous and $A_r:\Hi\tto\Hi$ is maximally monotone. Indeed, it can be easily proved that the product operator $\bs B$ in \eqref{eq:nps_A} keeps the cocoercivity/Lipschitz continuity property. However, the parallel algorithm obtained with this approach will coincide with the original Forward-Backward type algorithm applied to the operators $\sum_{i=1}^{r-1}A_i$ and $A_r$. It is worth mentioning that in the opposite case, that is, when one operator is cocoercive and the remaining ones are maximally monotone, a parallel Forward-Backward algorithm was developed in \cite{Raguet}.
\end{remark}

\section{Numerical experiments}\label{sec:Experiments}

In this section, we perform some numerical experiments to assess the advantage of the new proposed reformulation when applied to splitting or projection algorithms. In particular, we compare the performance of the proposed parallel Douglas--Rachford algorithm in \Cref{th:DR} with the standard parallel version in \cite[Proposition~26.12]{BC17}, first on a convex minimization problem and then in a nonconvex feasibility problem. We will refer to these algorithms as \emph{Reduced-DR} and \emph{Standard-DR}, respectively. In some experiments we will also test the algorithms in \cite[Theorem~4]{ryu} and \cite[Theorem 4.5]{MT21}, wich will be referred to as \emph{Ryu} and \emph{Malitsky--Tam}, respectively.  All codes were written in Python 3.7 and the tests were run on an Intel Core i7-10700K CPU \@3.80GHz with 64GB RAM, under Ubuntu 20.04.2 LTS (64-bit).

\subsection{The generalized Heron problem}

We first consider the \emph{generalized Heron problem}, which is described as follows. Given $\Omega_1,\ldots,\Omega_r\subseteq \R^n$ nonempty, closed and convex sets, we are interested in finding a point in $\Omega_r$ that minimizes the sum of the distances to the remaining sets; that is, 
\begin{equation}\label{p:Heron}
\begin{array}{rl}
\text{Min} & \sum_{i=1}^{r-1} d_{\Omega_i}(x)\\
\text{s.t.} & x\in \Omega_r.
\end{array}
\end{equation}%
This problem was investigated with modern convex analysis tools in \cite{MNS12a,MNS12b}, where it was solved by subgradient-type algorithms. It was later revisited in \cite{BH13}, where the authors implemented their proposed paralellized Douglas--Rachford-type primal-dual methods for its resolution. Indeed, splitting algorithms such as Douglas--Rachford can be employed to solve problem \eqref{p:Heron} as this is equivalent to the monotone inclusion \eqref{prob:zerosum} with
\begin{equation*}
A_r=\partial\iota_{\Omega_r}=N_{\Omega_r} \quad\text{and}\quad A_i=\partial d_{\Omega_i}, \text{ for } i=1,\ldots,r-1. 
\end{equation*}
According to \Cref{f:res_prox,f:normalcone}, $J_{\gamma A_r}=P_{\Omega_r}$ and $J_{\gamma A_i}=\prox_{\gamma d_{\Omega_i}}$, for $i=1,\ldots,r-1$. We recall that the proximity operator of the distance function to a closed and convex set $C\subseteq\Hi$ is given by
\begin{equation*}
\prox_{\gamma d_{C}}(x)=\left\{\begin{array}{ll}
x+\frac{\gamma}{{d_{C}(x)}}\left({P_{C}(x)-x}\right), & \text{if } d_{C}(x)>\gamma,\\
P_{C}(x), & \text{otherwise}.\end{array}\right.
\end{equation*}

In our experiments, the constraint sets $\Omega_1,\ldots,\Omega_{r-1}$ in \eqref{p:Heron} were randomly generated hypercubes of centers $(c_{i,1},\ldots,c_{i,n}),\ldots,(c_{r-1,1},\ldots,c_{r-1,n})\in\R^n$ with length side $\sqrt{2}$, while $\Omega_r$ was chosen to be the closed ball centered at zero with radius $10$; that is,
\begin{subequations}\label{eq:Heron_const}
\begin{align}
\Omega_i&:=\left\{(x_1,\ldots,x_n)\in\R^n : |c_{i,j}-x_j|\leq \frac{\sqrt{2}}{2}, \, j=1,\ldots, n\right\}, \quad i=1,\ldots,r-1,\\
\Omega_r&:=\{ x\in \R^n : \|x\|\leq 10\}.
\end{align}
\end{subequations}
More precisely, the centers of the hypercubes were randomly generated with norm greater or equal than $12$, so that the hypercubes did not intersect the ball. Two instances of the problem with $r=5$, in $\R^2$ and $\R^3$, are illustrated in \Cref{fig:Heron}.

\begin{figure}[!htbp]
\centering
\vspace{-0.3cm}
  \hspace{0.08\textwidth}\begin{subfigure}[c]{0.35\textwidth}
     \includegraphics[width=.85\textwidth]{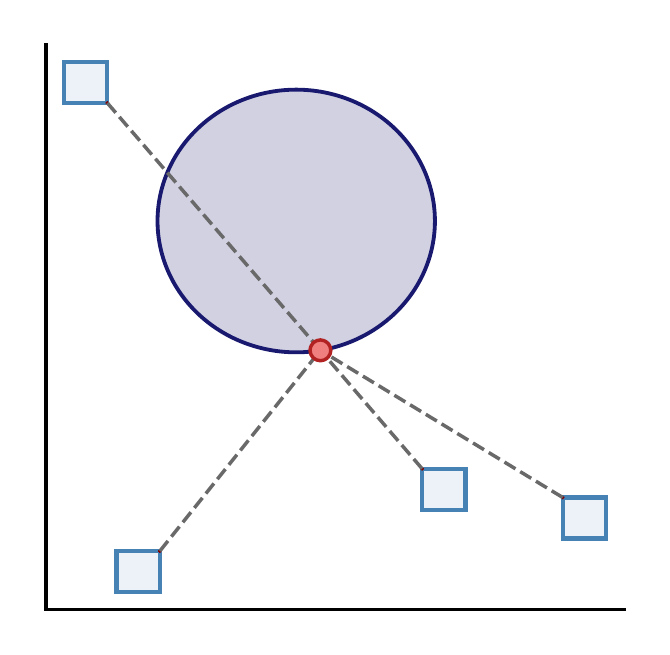}
  \end{subfigure}\hspace{0.06\textwidth}
  \begin{subfigure}[c]{0.45\textwidth}
     \includegraphics[width=.85\textwidth]{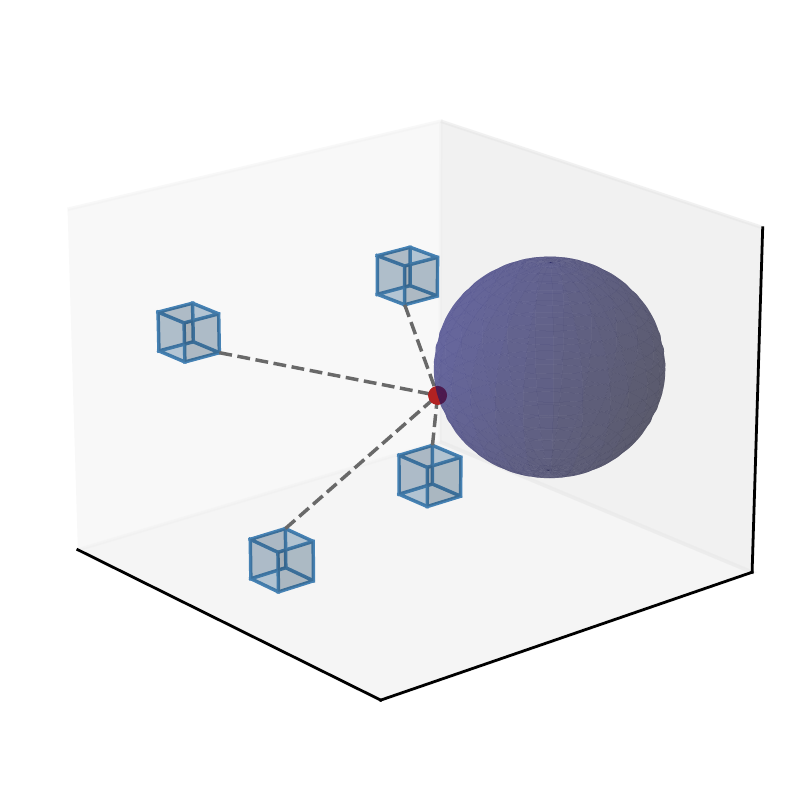}
  \end{subfigure}
  \vspace{-.8cm}
\caption{The generalized Heron problem consisting in finding a point in a ball in $\R^2$ (left) or $\R^3$ (right) that minimizes the sum of the distances to four squares (left) or cubes (right). A solution to the problem is represented by a red point.}\label{fig:Heron}
\end{figure}

In our first numerical test, we generated 10 instances of the problem \eqref{p:Heron}--\eqref{eq:Heron_const} in $\R^{100}$ with $r=3$. For each $\gamma\in\{1,10,25,50,75,100\}$ and each $\lambda\in\{0.1,0.2,\ldots,1.9\}$, Standard-DR and Reduced-DR were run from $10$ random starting points. For those values of $\lambda\leq 1$, Ryu and Malitsky--Tam algorithms were also run from the same initial points. All algorithms were stopped when the monitored sequence $\{p_k\}_{k=0}^\infty$ verified the Cauchy-type stopping criteria
\begin{equation*}
\|p_{k+1}-p_k\|<\varepsilon:= 10^{-6}
\end{equation*}
for the first time. For a fairer comparison, for each algorithm we monitored that sequence which is projected onto the feasible set $\Omega_r$ so that all of them lay on the same ambient space. The average number of iterations required by each algorithm among all problems and starting points is depicted in \Cref{fig:Heron_iter}. In \Cref{tbl:Heron} we list the best results obtained by each algorithm and the value of the parameters at which those results were achieved.

\begin{figure}[!ht]
\centering
  \begin{subfigure}[c]{0.44\textwidth}
     \includegraphics[width=\fw\textwidth]{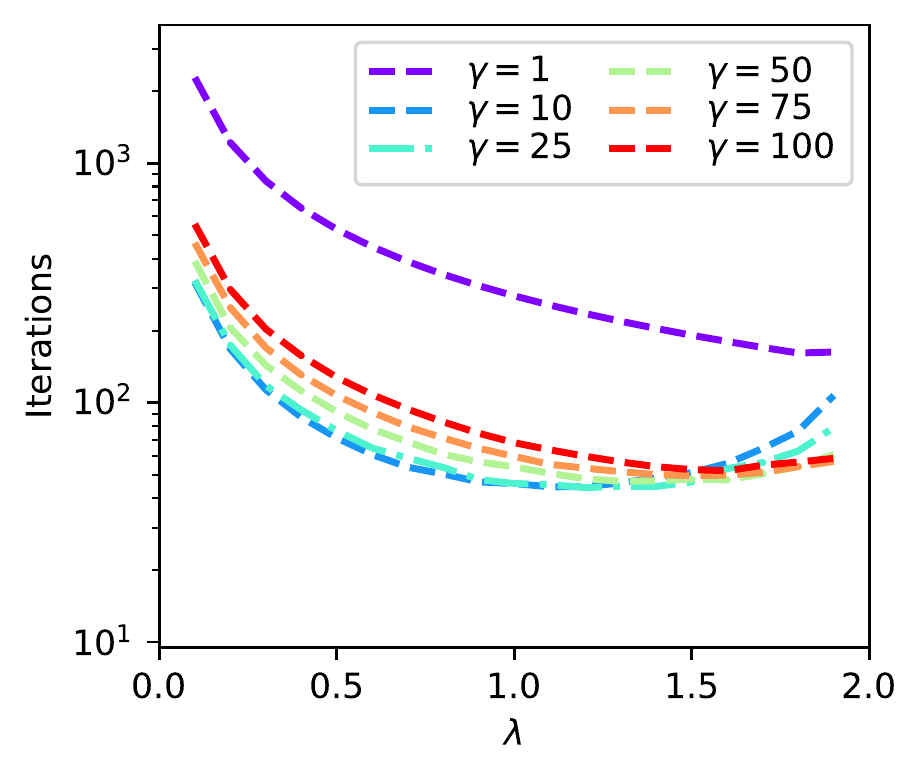}
     \subcaption{Standard-DR}
  \end{subfigure}
  \begin{subfigure}[c]{0.44\textwidth}
     \includegraphics[width=\fw\textwidth]{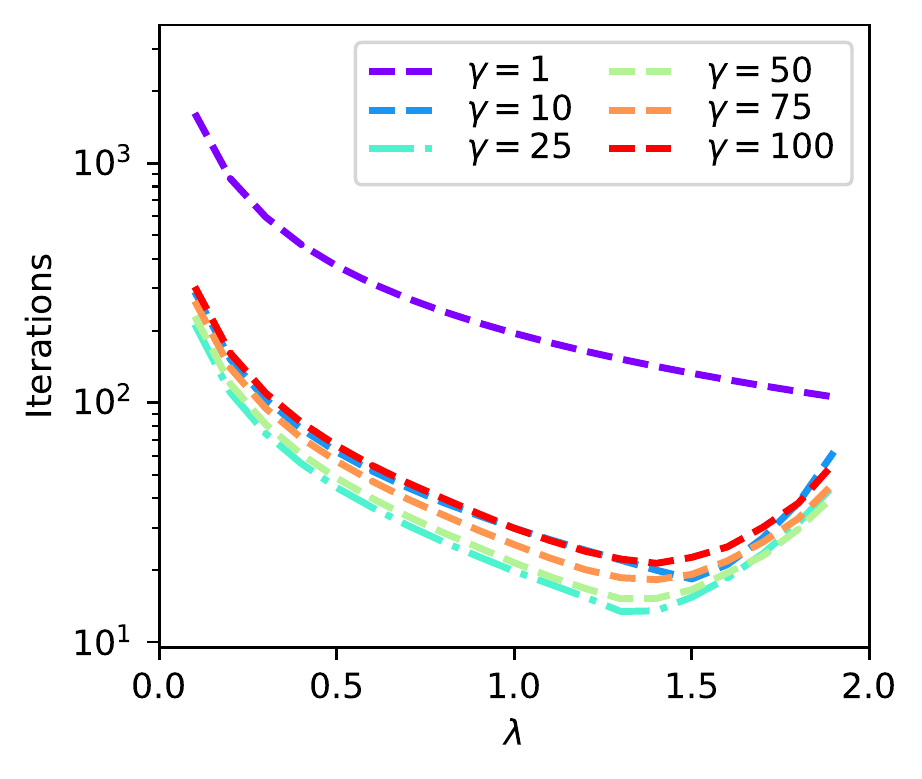}
     \subcaption{Reduced-DR}
  \end{subfigure}\\[3ex]
  
   \begin{subfigure}[c]{0.44\textwidth}
     \includegraphics[width=\fw\textwidth]{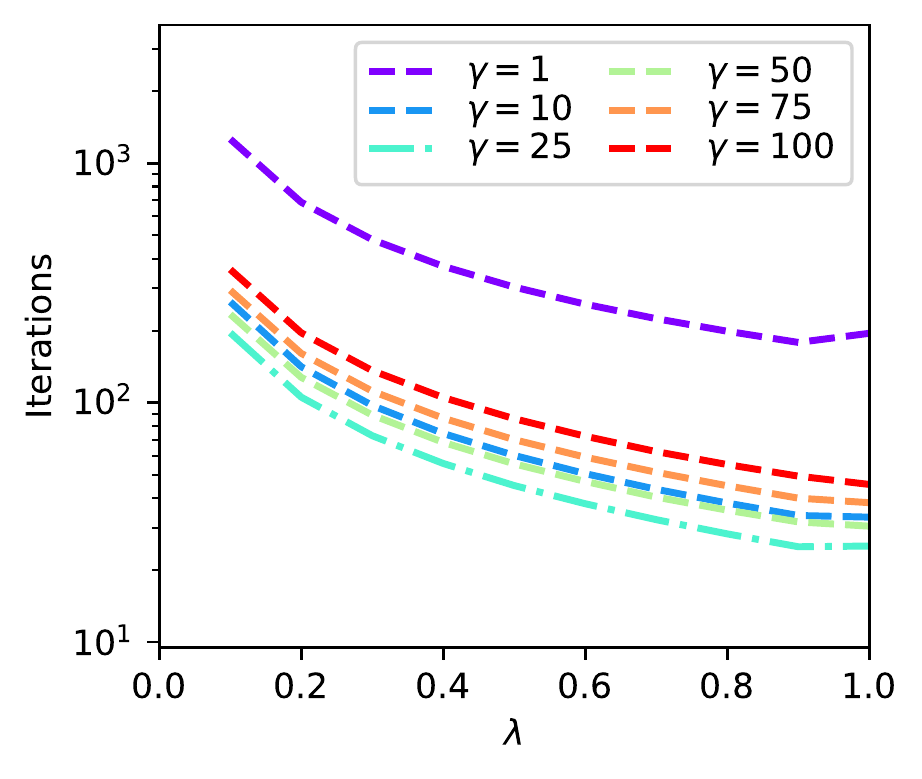}
     \subcaption{Malitsky--Tam}
  \end{subfigure}
  \begin{subfigure}[c]{0.44\textwidth}
     \includegraphics[width=\fw\textwidth]{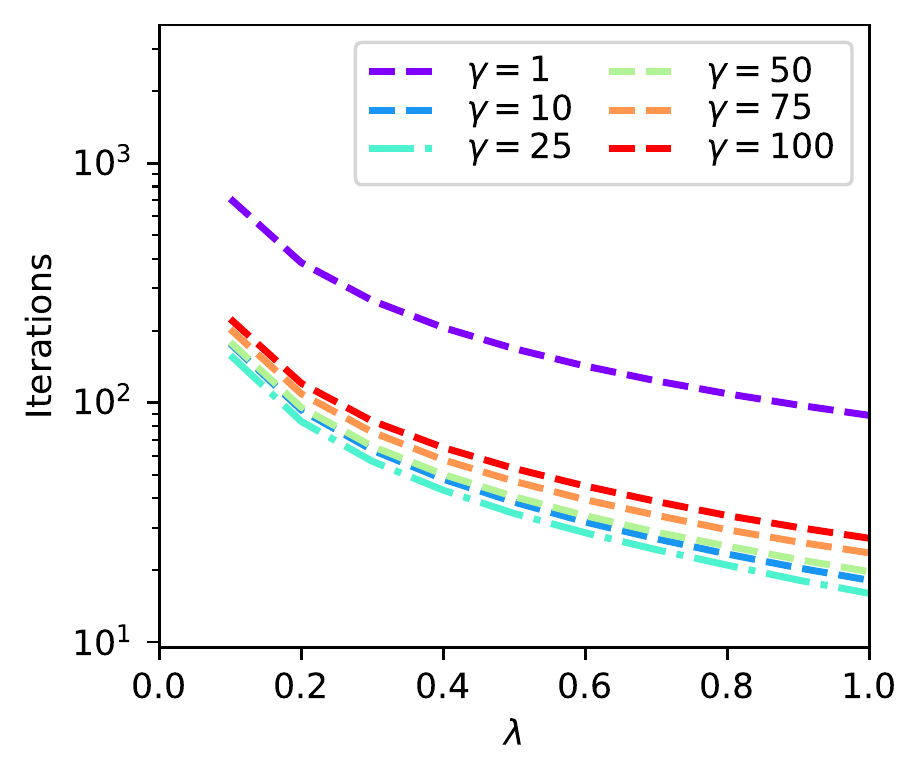}
     \subcaption{Ryu}
  \end{subfigure}
\caption{Performance of Standard-DR, Reduced-DR, Malitsky--Tam  and Ryu algorithms  for solving the generalized Heron problem in $\R^{100}$ with $r=3$. For each pair of parameters $(\gamma,\lambda)$, we represent the average number of iterations among $10$ problems and $10$ random~starting~points~each.}\label{fig:Heron_iter}
\end{figure}

\begin{table}[ht!]
\centering
\begin{tabular}{cccc}
\hline
Algorithm & $\gamma$ & $\lambda$ & Average iterations  \\
\hline
{\em Standard-DR} & $25$ & $1.2$ & $44.15$  \\
{\em Reduced-DR} & $25$ & $1.3$ & $13.41$ \\
{\em Malitsky--Tam} & $25$ & $0.9$ & $25.00$\\
{\em Ryu} & $25$ & $1.0$ & $15.96$ \\
\hline
\end{tabular}
\caption{Best choice of parameters and minimum averaged number of iterations, among $10$ problems and $10$ random~starting~points~each, required by Standard-DR, Reduced-DR, Malitsky--Tam and Ryu algorithms for solving the generalized Heron problem in $\R^{100}$ with $r=3$.}
\label{tbl:Heron}
\end{table}

Once the parameters had been tuned, we analyzed the effect of the dimension of the space ($n$), as well as the number of operators ($r$), on the comparison between all algorithms. For the first purpose, we fixed $r=3$ and generated $20$ problems in $\R^n$ for each $n\in\{100,200,\ldots,1000\}$. Then, for each problem we computed the average time, among $10$ random starting points, required by each algorithm to converge. Parameters $\gamma$ and $\lambda$ were chosen as in \Cref{tbl:Heron} according to the previous experiment. The results, shown in \Cref{fig:Heron_size}, confirm the consistent advantage of Reduced-DR and Ryu for all sizes. Indeed, these two algorithms were around 4 times faster than Standard-DR, whereas Malitsky--Tam was 2 times faster than Standard-DR.

For the second objective we repeated the experiment where now, for each number of operators $r\in\{3,4,\ldots,20\}$, we generated $20$ problems in $\R^{100}$. We did not consider Ryu splitting algorithm since it is only devised for three operators. We show the results in \Cref{fig:Heron_squares}, from which we deduce that the superiority of Reduced-DR and Malitsky--Tam over Pierra's standard reformulation is diminished as the number of operators increases. However, this drop is more drastic for the Makitsky--Tam algorithm. In fact,  while Reduced-DR is still always preferable to Standard-DR for all the considered values of $r$, Malitsky--Tam algorithm turns even slower than the classical approach when the number of operators is greater than~$6$.

\begin{figure}[!htbp]
\centering
  \hspace{0.02\textwidth}\begin{subfigure}[c]{0.46\textwidth}
     \includegraphics[width=\fw\textwidth]{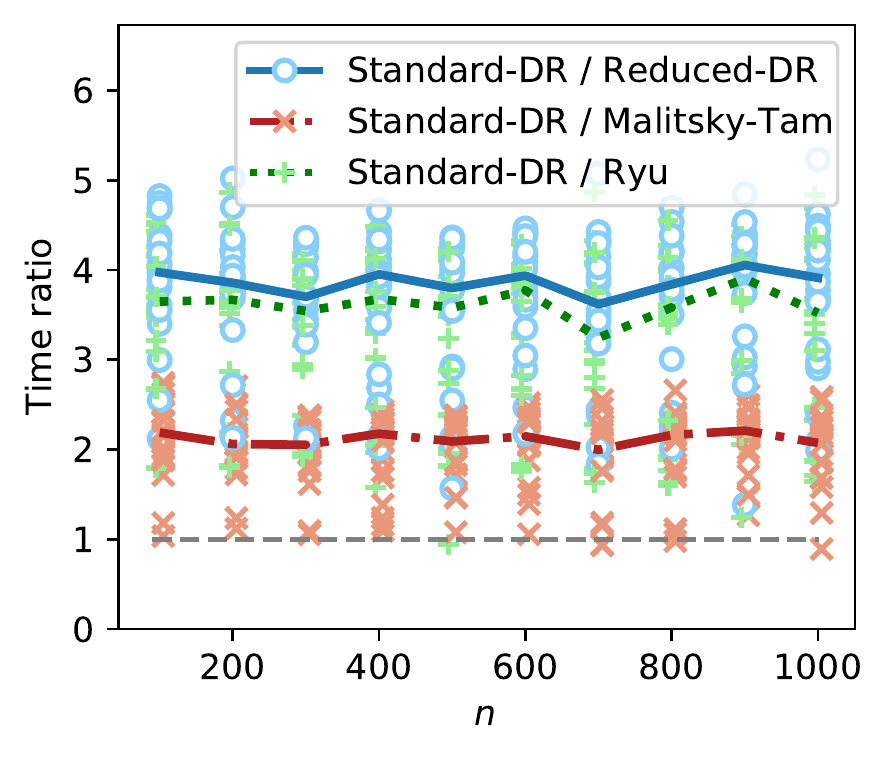}
     \subcaption{$n\in\{100,200,\ldots,1000\}$ and $r=3$.}\label{fig:Heron_size}
  \end{subfigure}\hspace{0.04\textwidth}
  \begin{subfigure}[c]{0.46\textwidth}
     \includegraphics[width=\fw\textwidth]{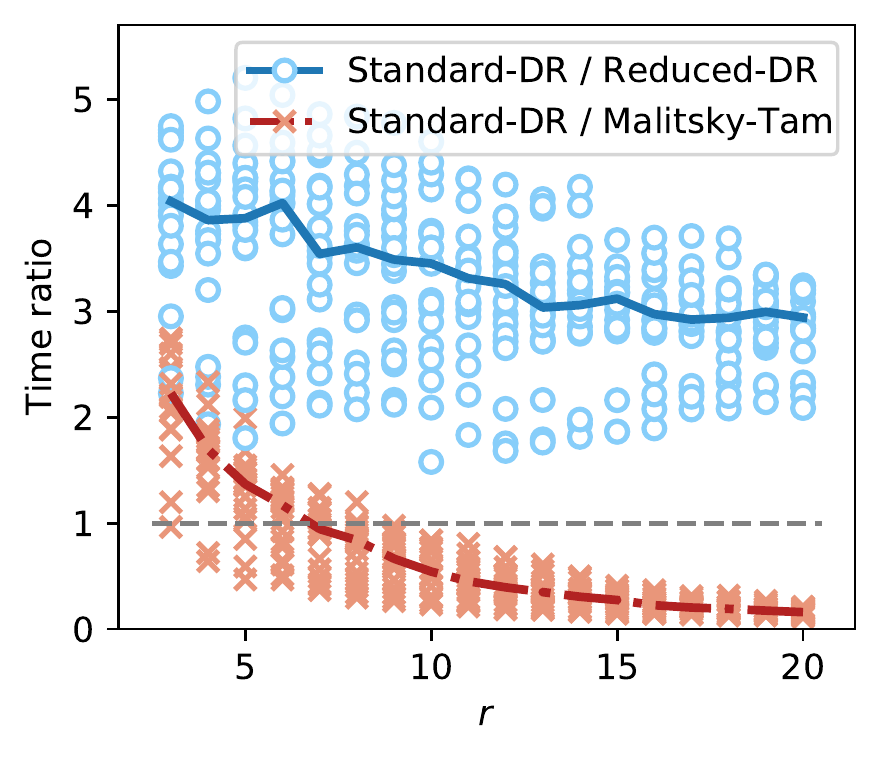}
     \subcaption{$n=100$ and $r\in\{3,4,\ldots,20\}$.}\label{fig:Heron_squares}
  \end{subfigure}
\caption{Comparison of the performance of Standard-DR, Reduced-DR, Malitsky--Tam and Ryu algorithms for solving $20$ instances of generalized Heron problem with $r$ sets in $\R^{n}$ for different values of $n$ and $r$. For each problem we represent the ratio between the average time required by each algorithm over Reduced-DR, among $10$ random starting points. The colored lines connect the median of the ratios while the dashed grey line represents ratios~equal~to~$1$.}\label{fig:Heron_analysis}
\end{figure}

\subsection{Sudoku puzzles}

In this section we analyze the potential of the product space reformulation with reduced dimension for nonconvex feasibility problems~(\Cref{p:npsf}). To this aim, we concern on Sudoku puzzles, which were first investigated by the Douglas--Rachford algorithm in~\cite{Elser}. Since then, other formulations as feasibility problems have been studied; see, e.g.,~\cite{aragon2014recents,aragon2018gc,aragon2020elser}. In this paper we consider the formulation with binary variables described in~{\cite[Section~6.2]{aragon2014recents}}, which we explain next.

Recall that a Sudoku puzzle is defined by a $9\times 9$ grid, composed by nine $3\times 3$ subgrids, where some of the cells are prescribed with some given values.  The objective is to fill the remaining cells so that each row, each column and each subgrid contains the digits from $1$ to $9$ exactly once. Possible solutions to a given Sudoku are encoded as a 3-dimensional multiarray $X\in\R^{9\times 9\times 9}$ with binary entries defined componentwise as
\begin{equation}\label{eq:Sud_enco}
X[i,j,k]=\left\{\begin{array}{cl}
1,& \text{if digit } k \text{ is assigned to the } (i,j)\text{th entry of the Sudoku},\\
0,& \text{otherwise;}
\end{array}\right.
\end{equation}
for $(i,j,k)\in I^3$ where $I:=\{1,2,\ldots,9\}$. Let $\C:=\{e_1,e_2,\ldots,e_9\}$ be the standard basis of $\R^9$, let $J\subseteq I^3$ be the set of indices for the prescribed entries of the Sudoku, and denote by $\ve M$ the vectorization, by columns, of a matrix $M$. Under encoding \eqref{eq:Sud_enco}, a solution to the Sudoku can be found by solving the feasibility problem

\begin{equation}\label{eq:formulation}
   \text{Find }X\in C_1\cap C_2\cap C_3\cap C_4\cap C_5 \subseteq \R^{9\times 9\times 9},
\end{equation}
where the constraint sets are defined by
\begin{equation*}\label{eq:constraints}
 	\begin{aligned}
 	C_1  &:= \left\{
 	X\in\R^{9\times 9\times 9}: X[i,:,k]\in\C, \forall i,k\in I
 	\right\}, \\
 	C_2  &:= \left\{
 	X\in\R^{9\times 9\times 9}: X[:,j,k]\in\C, \forall j,k\in I
 	\right\}, \\
 	C_3  &:= \left\{
 	X\in\R^{9\times 9\times 9}: X[i,j,:]\in\C, \forall i,j\in I
 	\right\}, \\
 	C_4  &:= \left\{
 	X\in\R^{9\times 9\times 9}: \begin{array}{c}\ve X[3i+1:3(i+1),3j+1:3(j+1),k]\in\C,\\ \forall i,j\in\{0,1,2\}, \forall k\in I \end{array}
 	\right\}, \\
 	C_5  &:= \left\{
 	X\in\R^{9\times 9\times 9}: X[i,j,k]=1, \forall (i,j,k)\in J
 	\right\}. \\
 	\end{aligned}
\end{equation*}
Observe that nonconvexity of problem \eqref{eq:formulation} arises from the combinatorial structure of $C_1$, $C_2$, $C_3$, $C_4\subseteq\{0,1\}^{9\times 9\times 9}$. Projections onto these sets can be computed by means of the projector mapping onto $\C$ (see~\cite[Remark~5.1]{aragon2014recents}). On the other hand, $C_5$ is an affine subspace of $\R^{9\times 9\times 9}$ whose projector can be readily computed component-wise as
\begin{equation*}
P_{C_5}(X)[i,j,k]=\left\{\begin{array}{ll}
1,& \text{if } (i,j,k)\in J,\\
X[i,j,k],& \text{otherwise.}
\end{array}\right.
\end{equation*}
 
In our experiment we considered the 95~hard puzzles from the library \texttt{top95}\footnote{\texttt{top95}: \url{http://magictour.free.fr/top95}}. For each puzzle, we run Standard-DR, Reduced-DR and Malitsky--Tam from $10$ random initial points. Parameter $\lambda$ was roughly tuned for good performance and it was fixed to $\lambda=1$ for Standard-DR and Reduced-DR and $\lambda=0.5$ for Malitsky--Tam. The algorithms were stopped when either they found a solution or when the CPU running time exceeded 5 minutes. A summary of the results can be found in \Cref{tbl:Sudoku}. While the success of all three algorithms is very similar,  the average CPU time and, specially, the proportion of wins are clearly favorable to Reduced-DR.

\begin{table}[ht!]
\centering
\begin{tabular}{cccc}
\hline
Algorithm & Solved & Wins & Time (median) \\
\hline
{\em Standard-DR} & $91.78\%$ & $18.94\%$ & $0.7423~s.$ \\
{\em Reduced-DR} & $91.78\%$ & $55.47\%$ & $0.5904~s.$ \\
{\em Malitsky--Tam} & $88.73\%$ & $15.05\%$ & $0.8836~s.$ \\
\hline
\end{tabular}
\caption{Results of the comparison between Standard-DR, Reduced-DR and Malitsky--Tam algorithm for solving 95 Sudoku problems from $10$ random starting points each. For each algorithm, we show the percentage of solved instances, the percentage of instances for which the algorithm was fastest, and the median of the CPU time required among the solved instances. Instances were labeled as unsolved after 5 minutes.}
\label{tbl:Sudoku}
\end{table}

In order to better visualize the results we turn to \emph{performance profiles} (see~\cite{DM02} and the modification proposed in~\cite{ISU16}), which are constructed as explained next.

\paragraph{Performance profiles}
Let $\mathcal{A}$ denote a set of algorithms to be tested on be a set of $N$ problems, denoted by $\mathcal{P}$, for multiple runs (starting points). Let $s_{a,p}$ denote the fraction of successful runs of algorithm $a\in\mathcal{A}$ on problem $p\in\mathcal{P}$ and let $t_{a,p}$ be the averaged time required to solve those successful runs. Compute $t^\star_p:=\min_{a\in\mathcal{A}} t_{a,p}$ for all $p\in\mathcal{P}$. Then, for any $\tau\geq 1$, define $R_a(\tau)$ as the set of problems for which algorithm $a$ was at most $\tau$ times slower than the best algorithm; that is, $R_a(\tau):=\{p\in\mathcal{P}, t_{a,p}\leq\tau t^\star_p\}$. The \emph{performance profile} function of algorithm $a$ is~given~by
\begin{equation*}
\begin{array}{rccl}
\rho_a: & [1,+\infty) & \longmapsto & [0,1]\\
& \tau &  \mapsto & \rho_a(\tau):=\frac{1}{N}\sum_{p\in R_a(\tau)} s_{a,p}.
\end{array}
\end{equation*}
The value $\rho_a(1)$ indicates the portion of runs for which $f$ was the fastest formulation. When $\tau\rightarrow+\infty$, then $\rho_a(\tau)$ gives the proportion of successful runs for formulation $f$.\\

Performance profiles of the results of Sudoku experiment are shown in \Cref{fig:Sudoku}, which confirm the conclusions drawn from \Cref{tbl:Sudoku}. Furthermore, we can now asses that Reduced-DR becomes consistently superior since its performance profile is mostly above the one of the remaining two algorithms.

\begin{figure}[!htbp]
\centering
  	\begin{subfigure}[c]{0.52\textwidth}
     \includegraphics[width=\textwidth]{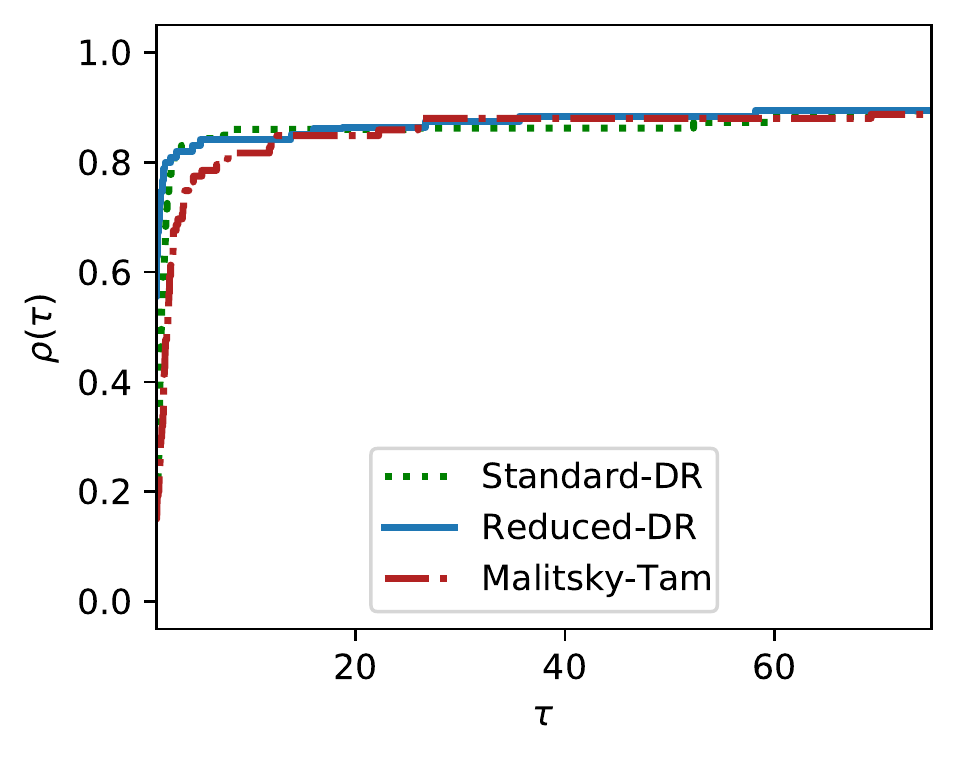}
     \end{subfigure}
     \begin{subfigure}[c]{0.36\textwidth}
     \includegraphics[width=\textwidth]{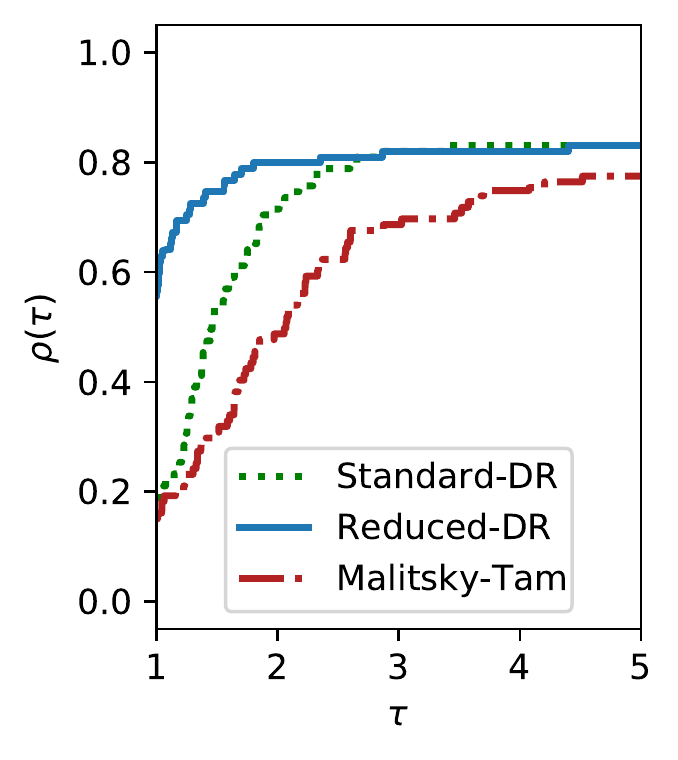}
     \end{subfigure}
\caption{Performance profiles comparing Standard-DR, Reduced-DR  and Malitsky--Tam algorithm for solving 95 Sudoku problems (left). For each problem, 10 random starting points were considered. Instances were labeled as unsolved after 5 minutes of CPU running time. For the sake of clarity we focus the view of the performance profiles to the values of  $\tau\in[1,5]$ (right).}\label{fig:Sudoku}
\end{figure}

We would like to conclude with the following comment regarding the implementation of splitting algorithms on \eqref{eq:formulation}. 

\begin{remark}[On the order of the sets]
Observe that Pierra's classical reformulation in \Cref{p:psf}, and thus Standard-DR, is completely symmetric on the order of the sets $C_1,\ldots,C_5$. However, this is not the case for the reformulation in \Cref{p:npsf}, where one has to decide which of the sets will be merged to the diagonal to construct the set $\bs K$ in \eqref{eq:npsf_K}. In our test, we followed the arrangement in \Cref{p:npsf}, that is, 
\begin{align*}
\bs B:= & C_1\times C_2\times C_3\times C_4 \subseteq (\R^{9\times 9\times 9})^{4},\\[0.2em]
\bs K:= & \{(x,x,x,x)\in(\R^{9\times 9\times 9})^{4}: x\in C_5\}\subseteq (\R^{9\times 9\times 9})^{4}.
\end{align*}
Note that this makes the constrained diagonal set $\bs K$ to be an affine subspace. Due to the nonconvexity of the problem, the reformulation chosen may be crucial for the success of the algorithm. For example, we tested all the remaining combinations, for which Reduced-DR rarely found a solution on the considered problems within the first 5 minutes of~running~time.
\end{remark}

{\small
\paragraph{\small Acknowledgements}
The author was partially supported by the Ministry of Science, Innovation and Universities of Spain and the European Regional Development Fund (ERDF) of the European Commission (PGC2018-097960-B-C22), and by the Generalitat Valenciana (AICO/2021/165).

}


\begin{thebibliography}{99}

\setlength{\itemsep}{2pt plus 0.3ex}

\bibitem{adly2019decomposition}
  Adly, S., Bourdin, L.:
  \newblock On a decomposition formula for the resolvent operator of the sum of two set-valued maps with monotonicity assumptions.
  \newblock \emph{Appl. Math. Opt.} 80(3), 715--732 (2019)
  
\bibitem{alwadani2018asymptotic}
  Alwadani, S., Bauschke, H. H., Moursi, W. M., \& Wang, X.
  \newblock On the asymptotic behaviour of the Arag\'on Artacho--Campoy algorithm.
  \newblock \emph{Oper. Res. Lett.} 46(6), 585--587 (2018)

\bibitem{ABT14a}
  Arag\'on Artacho, F.J., Borwein, J.M., Tam, M.K.:
  \newblock Douglas–Rachford feasibility methods for matrix completion problems.
  \newblock {\em ANZIAM J.} 55(4), 299--326 (2014)

\bibitem{aragon2014recents}
  Arag\'on Artacho, F.J., Borwein, J.M., Tam, M.K.:
  \newblock Recent results on Douglas-Rachford methods for combinatorial optimization problem.
  \newblock {\em J. Optim. Theory. Appl.} 163(1), 1--30 (2014)

\bibitem{aragon2018new}
  Arag\'on Artacho, F. J., Campoy, R.:
  \newblock A new projection method for finding the closest point in the intersection of convex sets.
  \newblock {\em Comput. Optim. Appl.} 69(1), 99--132 (2018)

\bibitem{aragon2018gc}
   Arag\'on Artacho, F.J., Campoy R.
   \newblock Solving graph coloring problems with the Douglas--Rachford algorithm.
   \newblock {\em Set-Valued Var. Anal.} 26(2), 277--304 (2018)

\bibitem{aragon2019computing}
  Arag\'on Artacho, F. J., Campoy, R.:
  \newblock Computing the resolvent of the sum of maximally monotone operators with the averaged alternating modified reflections algorithm.
  \newblock {\em J. Optim. Theory Appl.} 181(3), 709--726 (2019)
  
\bibitem{aragon2020elser}
  Arag\'on Artacho, F. J., Campoy, R., Elser, V:
  \newblock An enhanced formulation for solving graph coloring problems with the Douglas--Rachford algorithm.
  \newblock {\em J. Global Optim.} 77(2), 383--403 (2020)
  
\bibitem{combdes}
Arag\'on Artacho, F. J., Campoy, R., Kotsireas, I.S., Tam, M. K.
\newblock A feasibility approach for constructing combinatorial designs of circulant type.
\newblock {\em J. Comb. Optim.} 35(4), 1061--1085 (2018)
  
\bibitem{aragon2020ORclassroom}
  Arag\'on Artacho, F. J., Campoy, R., Tam, M. K.:
  \newblock The Douglas--Rachford algorithm for convex and nonconvex feasibility problems.
  \newblock {\em Math. Methods Oper. Res.} 91(2), 201--240 (2020)
  
\bibitem{aragon2020strengh}
  Arag\'on Artacho, F. J., Campoy, R., Tam, M. K.:
  \newblock Strengthened splitting methods for computing resolvents.
  \newblock {\em Comput. Optim. Appl.} 80(2), 549--585 (2021)

\bibitem{BBL19}
Bauschke, H.H., Burachik, R.S., Luke, D.R. (Eds.):
\newblock {\em Splitting Algorithms, Modern Operator Theory, and Applications.} \newblock Springer, Cham (2019)

\bibitem{BC17}
Bauschke, H.H., Combettes, P.L.:
\newblock {\em Convex analysis and monotone operator theory in Hilbert spaces, 2nd ed.}
\newblock Springer, Berlin (2017)

\bibitem{BCL02}
Bauschke, H.H., Combettes, P.L., Luke D.R.:
\newblock Phase retrieval, error reduction algorithm, and Fienup variants: a view from convex optimization. 
\newblock {\em J. Opt. Soc. Am A} 19(7), 1334--1345 (2002)

\bibitem{BM17}
  Bauschke, H.H., Moursi, W.M.:
  \newblock On the Douglas--Rachford algorithm.
  \newblock {\em Math. Program.} 164(1--2), Ser. A, 263--284 (2017)
  
\bibitem{BM21}
  Bauschke, H.H., Moursi, W.M.:
  \newblock On the Douglas-Rachford algorithm for solving possibly inconsistent optimization problems.
  \newblock ArXiv preprint (2021). Arxiv: \href{https://arxiv.org/abs/2106.11547}{2106.11547} 
  
\bibitem{BH13}
Bot, R. I., Hendrich, C.:
\newblock A Douglas--Rachford type primal-dual method for solving inclusions with mixtures of composite and parallel-sum type monotone operators.
\newblock {\em SIAM J. Optim.} 23(4), 2541--2565 (2013)


\bibitem{cevher2019reflected}
  Cevher, V., Vu, B. C.:
  \newblock A reflected forward-backward splitting method for monotone inclusions involving Lipschitzian operators.
  \newblock \emph{Set-Valued Var. Anal.} 29(1), 163--174 (2021)
  
\bibitem{combettes2009iterative}
  Combettes, P.L.:
  \newblock Iterative construction of the resolvent of a sum of maximal monotone operators.
  \newblock {\em J. Convex Anal.} 16(4), 727--748 (2009)
 
\bibitem{condat}
Condat, L., Kitahara, D., Contreras, A., Hirabayashi, A:
\newblock Proximal Splitting Algorithms for Convex Optimization: A Tour of Recent Advances, with New Twists.
\newblock ArXiv preprint (2021). Arxiv: \href{https://arxiv.org/abs/1912.00137}{1912.00137}


\bibitem{DDHT21}
  Dao, M., Dizon, N., Hogan, J., Tam, M. K.:
  \newblock Constraint reduction reformulations for projection algorithms with applications to wavelet construction.
  \newblock {\em J. Optim. Theory Appl.} 190(1), 201--233 (2021)

\bibitem{dao2019resolvent}
  Dao, M. N., Phan, H.M.:
  \newblock Computing the resolvent of the sum of operators with application to best approximation problems.
  \newblock \emph{Optim. Lett.} 14(5), 1193--1205 (2020)
  
\bibitem{davis2017three}
  Davis, D., Yin, W.:
  \newblock A three-operator splitting scheme and its optimization applications.
  \newblock \emph{Set-Valued Var. Anal.} 25(4), 829--858 (2017)

\bibitem{DM02}
Dolan, E.D., Mor\'e, J.J.:
\newblock Benchmarking optimization software with performance profiles.
\newblock {\em Math. Program.} 91(2), Ser. A, 201--213 (2002)

\bibitem{DR56}
Douglas, J., Rachford, H.H.:
\newblock On the numerical solution of heat conduction problems in two and three space variables.
\newblock {\em Trans. Amer. Math. Soc.} 82(2), 421--439 (1956)

\bibitem{ElserPR}
Elser, V.:
\newblock Phase retrieval by iterated projections.
\newblock {\em J. Opt. Soc. Am. A} 20(1), 40--55 (2003)

\bibitem{Elser}
Elser, V., Rankenburg, I., Thibault, P.:
\newblock Searching with iterated maps.
\newblock {\em Proc. Natl. Acad. Sci.} 104(2), 418--423 (2007)

\bibitem{FHT19}%
Franklin, D.J., Hogan, J.A., Tam, M.K.: 
\newblock Higher-dimensional wavelets and the Douglas--Rachford algorithm. 
\newblock In: {\em 13th International Conference on Sampling Theory and Applications (SampTA)}, pp. 1--4, IEEE (2019)


\bibitem{ISU16}
Izmailov, A.F., Solodov, M.V., Uskov, E.T.:
\newblock Globalizing stabilized sequential quadratic programming method by smooth primal-dual exact penalty function.
\newblock {\em J. Optim. Theor. Appl.} 169(1), 1--31 (2016)

\bibitem{krugerPS}
Kruger, A. Y.:
\newblock Generalized differentials of nonsmooth functions, and necessary conditions for an extremum.
\newblock {\em Sib. Math. J.} 26(3), 370--379 (1985)


\bibitem{LLS19}
Lamichhane B.P., Lindstrom S.B., Sims B.: 
\newblock Application of projection algorithms to differential equations: boundary value problems.
\newblock {\em ANZIAM J.} 61(1), 23--46 (2019)


\bibitem{LM79}
Lions, P.L., Mercier, B.:
\newblock Splitting algorithms for the sum of two nonlinear operators.
\newblock {\em SIAM J. Numer. Anal.} 16(6), 964--979 (1979)

\bibitem{malitsky2018forward}
  Malitsky, Y., Tam, M. K.:
  \newblock A forward-backward splitting method for monotone inclusions without cocoercivity.
  \newblock \emph{SIAM J. Optim.} 30(2), 1451--1472 (2020)

\bibitem{MT21}
  Malitsky, Y., Tam, M. K.:
  \newblock Resolvent splitting for sums of monotone operators with minimal lifting.
  \newblock ArXiv preprint (2021). Arxiv: \href{https://arxiv.org/abs/2108.02897}{2108.02897} 
  
\bibitem{MNS12a}
Mordukhovich, B.S., Nam, N.M, Salinas, J.:
\newblock Solving a generalized Heron problem by means of convex analysis.
\newblock {\em Amer. Math. Monthly} 119(2), 87--99 (2012)

\bibitem{MNS12b}
Mordukhovich, B.S., Nam, N.M, Salinas, J.:
\newblock Applications of variational analysis to a generalized Heron problem.
\newblock {\em Appl. Anal.} 91(10), 1915--1942 (2012)

\bibitem{Pierrathesis}
Pierra, G.:
\newblock Méthodes de décomposition et croisement d'algorithmes pour des problèmes d'optimisation.
\newblock Doctoral dissertation, Institut National Polytechnique de Grenoble-INPG; Université Joseph-Fourier-Grenoble I, 1976.

\bibitem{Pierra}
  Pierra, G.:
  \newblock Decomposition through formalization in a product space.
  \newblock \emph{Math. Program.} 28(1), 96--115 (1984)
  

\bibitem{Raguet}
  Raguet, H., Fadili, J., Peyr\'e, G.: 
  \newblock A generalized forward-backward splitting.
  \newblock {\em SIAM J. Imaging Sci.} 6(3), 1199--1226 (2013)
  
\bibitem{rieger2020backward}
  Rieger, J., Tam, M. K. (2020).
  \newblock Backward-forward-reflected-backward splitting for three operator monotone inclusions.
  \newblock \emph{Appl. Math. Comput.} 381, 125248.

\bibitem{RockaProx}
   Rockafellar, R.T.: 
   \newblock Monotone operators and the proximal point algorithm. 
   \newblock {\em SIAM J. Control Optim.} 14(5), 877--898 (1976)

\bibitem{ryu}
  Ryu, E. K.:
  \newblock Uniqueness of DRS as the 2 operator resolvent-splitting and impossibility of 3 operator resolvent-splitting.
  \newblock \emph{Math. Program.} 182(1), 233--273 (2020)

\bibitem{tseng2000modified}
  Tseng, P.:
  \newblock A modified forward-backward splitting method for maximal monotone mappings.
  \newblock \emph{SIAM J. Control Optim.} 38(2), 431--446 (2000)
  
\end{thebibliography}
\end{document}